\documentclass[reqno]{amsart}

\usepackage{appendix}
\usepackage{enumitem}
\usepackage{array}
\usepackage{color}
\usepackage{amsmath}
\usepackage{amsthm}
\numberwithin{equation}{section}

\usepackage{graphicx}

\newtheorem{thm}{Theorem}[section]

\newtheorem{lem}[thm]{Lemma}

\newtheorem{rem}{Remark}[section]

\newcommand{\E}{\mathcal{E}}
\newcommand{\T}{\mathbb{T}}
\newcommand{\C}{\mathcal{C}}
\newcommand{\LL}{\mathcal{L}}
\newcommand{\J}{\mathcal{J}}
\newcommand{\W}{\operatorname{W}}

\DeclareMathOperator*{\arginf}{arg\,inf}
\DeclareMathOperator*{\argmin}{arg\,min}

\numberwithin{figure}{section}

\title[Positivity-preserving scheme for quantum diffusion equation]{A positivity-preserving and energy stable scheme for a quantum diffusion equation}

\author{Xiaokai Huo}
\address{Computer, Electrical and Mathematical Science and Engineering Division,
King Abdullah University of Science and Technology (KAUST),Thuwal 23955-6900, Saudi Arabia}\email{xiaokai.huo@kaust.edu.sa}

\author{Hailiang Liu}
\address{Iowa State University, Department of Mathematics, Ames, IA 50011}
\email{hliu@iastate.edu}




\begin{document}
\begin{abstract}
    We propose a new fully-discretized finite difference scheme for a quantum diffusion equation, in both one and two dimensions. This is the first fully-discretized scheme with proven positivity-preserving and energy stable properties using only standard finite difference discretization. The difficulty in proving the positivity-preserving property lies in the lack of a maximum principle for fourth order PDEs. To overcome this difficulty, we reformulate the scheme as an optimization problem based on variational structure and use the singular nature of the energy functional near the boundary values to exclude the possibility of non-positive solutions. The scheme is also shown to be mass conservative and consistent.
\end{abstract}

\keywords{
 Finite difference, Higher-order parabolic equations, Positivity-preserving, Energy dissipation}\maketitle



\section{Introduction}
Nonlinear diffusion equations of fourth and higher order have since long been of interest in various fields of mathematical physics with diverse applications.  However, few mathematical results and numerical tools are available for higher-order equations when compared to the theory of second-order diffusion equations. One most important property for many higher-order diffusion equations is the positiveness of their solutions. 
However, the positiveness of solutions is rather subtle to prove due to the lack of a maximum principle.   The objective of this paper is to construct a novel numerical scheme for higher-order diffusion equations using a standard finite difference discretization, yet with proven positivity of numerical solutions. 

Here  we focus on the quantum diffusion equation on the torus $\mathbb{T}^d$ ($d \ge 1$ be any positive integer) with periodic boundary conditions:
\begin{align}\label{eq:DLSSnd}
  \partial_t u = - 2\nabla \cdot \left(u \nabla \frac{\Delta \sqrt{u} }{\sqrt{u}}\right), \quad u(\cdot, 0)=u_0 > 0. 
\end{align}
Here $u = u(x,t) \in \mathbb{R}^+$ is a scalar unknown function.

Equation \eqref{eq:DLSSnd} appears in several physical systems. In the one dimensional case, it is sometimes called the 
Derrida-Lebowitz-Speer-Spohn equation and was first introduced in \cite{derrida1991fluctuations} 
in the context of spin systems.  
The multi-dimensional equation often 
appears in the context of semi-conductor modeling, and can be viewed as the evolution of the density of electrons with vanishing temperature of the simplified quantum drift-diffusion model \cite{An87, GMSU95}:
$$
\partial_t u = {\rm div}(T\nabla u+u\nabla V), \quad  V=V_e-\frac{\epsilon^2}{6} \frac{\Delta \sqrt{u}}{\sqrt{u}}.
$$
Here $T>0$ is the temperature, $\epsilon$ the Planck constant, and $V$ the potential felt by the electrons, which splits into the classical electric potential  $V_e$ and the Bohm potential, 
describing quantum effects. 
The equation can also be found from quantum hydrodynamics as the high friction limit of some quantum hydrodynamic equations, see \cite{GLT17,GT17,LT17}.

\subsection{Related work} 
Positivity of solutions plays an important role in the analysis of equation \eqref{eq:DLSSnd}. 
In the one dimensional case, the local-in-time existence and uniqueness 
were proved in \cite{BLS94}. 
For initial data $u_0(x)>0$ almost everywhere, such solution remains positive until the maximal existence time $T^*$, in the sense that $\lim_{t\to (T^*)^-} u(\cdot,t)$ vanishes at some point.
However, it is not known whether $T^*=\infty$. 
The only known global existence results are for weak solutions, proved in \cite{JP00} for the one dimensional case and in \cite{GST08,JM08} for higher dimensions.
Further theoretical studies of this equation can be found in  \cite{derrida1991fluctuations,CCT03,JT03}.

Equation \eqref{eq:DLSSnd} can be formally put as a gradient flow of form 
\begin{align}\label{eq:dlss-general}
\partial_t u =\nabla \cdot \left( u \nabla \left( 
\frac{\delta F}{\delta u}
\right) \right),
\end{align}
where the \emph{energy functional}  $F:=F(u)$ is given by
\begin{align}\label{eq:Fdef}
F(u)=\frac{1}{2}\int_{\mathbb{T}^d} \frac{|\nabla u|^2}{u}dx.
\end{align}
Note that this functional is also called \emph{Fisher information} in the literature. A rigorous justfication of 
the gradient flow structure of this equation was given in \cite{GST08} using 
the Wasserstein distance 
\begin{align}\label{eq:wassersteinmetric}
  \W_2^2(u^0,u^1) = \inf_{\gamma \in \Pi(u^0,u^1)} \int_{\mathbb{T}^d \times \mathbb{T}^d} |x-y|^2 d\gamma(x,y), 
\end{align}
where $\Pi(u^0, u^1)$ is the set of all probability measures on $\mathbb{T}^d \times \mathbb{T}^d$ with first marginal $u^0$ and second marginal $u^1$, and the symbol $|\cdot|$ denotes the usual Euclidean norm in $\mathbb{T}^d$. 

Positivity of numerical solutions is highly desirable for any numerical method to solve the equation \eqref{eq:DLSSnd}. 
One way to obtain positivity is to introduce auxiliary variables and enforce the solution positivity accordingly. For example, one can introduce $v=\log u$  \cite{CJT03,JP01} or $v = \sqrt{u}$ \cite{BEJ14}  
to ensure a positive  solution. 
Another way to obtain positivity is to use the Wasserstein gradient flow structure. 
In the one dimensional case, the Wasserstein distance \eqref{eq:wassersteinmetric} corresponds to  
the $L^2$ distance between Lagrangian maps.  Positivity of $u$ then follows from 
the monotonicity of the Lagrangian map \cite{MO17}. 
One can also utilize the Eulerian formulation of the Wasserstein gradient flow structure. More precisely, by the Benamou-Brenier formulation of the Wasserstein distance \cite{BB00}, which is defined by 
\begin{align*}
  \W^2_2(u^0,u^1):=\inf_{u, m} \bigg\{& \int_0^1 \int_{\mathbb{T}^d} \frac{m^2}{u} dxdt, \\
  &\text{s.t. } \partial_t u + \nabla\cdot m=0,~~ u(0,x)=u^0(x),~~ u(1,x)=u^1(x) \bigg\},
\end{align*}
the scheme proposed in \cite{LLW} for \eqref{eq:DLSSnd} takes  the form  
\begin{align} \label{eq:jko}
    u^{n+1} =\arginf_{u,m}\bigg\{ &\int_{\mathbb{T}^d} \frac{m^2}{2u}dx  + \Delta t F(u), \\
    &\text{s.t. } u - u^n(x) + \nabla \cdot m(x) =0\bigg\}.\nonumber
\end{align}
Here $\Delta t$ is the time step and $u^n$ is the solution at the $n$-th time step. 
Positivity of {$u^{n+1}$} was proved since the objective function in the above optimization problem becomes infinite when $u$ touches zero \cite{LLW}.

Some other numerical approximations  such as finite volume methods were also developed to solve this equation, see, e.g., \cite{MM16}. However, the existing positivity-preserving numerical methods for equation \eqref{eq:DLSSnd} do not seem to follow simply from a direct finite difference discretization. In \cite{MO17}, the authors even argued that there is no reason to expect the positivity-preserving property from a standard discretization approach. 

\subsection{Our contributions}
The main objectives of this paper are to present a novel numerical scheme  to approximate  \eqref{eq:DLSSnd} using a standard finite difference discretization and prove how positivity of numerical solutions can be obtained from such a discretization. 

We begin with the semi-discretized scheme
\begin{align}\label{eq:sch}
    \frac{u^{n+1}-u^n}{\Delta t} = \nabla \cdot (u^n \nabla H^{n+1}), \quad H^{n+1} = \delta_{u} F(u^{n+1}),
\end{align}
where $H^{n+1}$ is obtained by the variational derivative of energy functional $F=F(u)$. This scheme is formally equivalent to the optimization problem 
\begin{align}\label{eq:optimiz}
    u^{n+1} =  \arginf_{u} \left\{ \frac{1}{2} \|u-u^n\|_{\LL^{-1}_{u^n}}^2 + \Delta t F(u)\right\}.
\end{align}
Here  with $\LL_{u^n}\phi: =-\nabla \cdot (u^n \nabla \phi)$ we define 
\[\|f\|_{\LL^{-1}_{u^n}}^2 := \left\{ \int_{\mathbb{T}^d} u^n|\nabla \phi| ^2 dx: 
f=\LL_{u^n}\phi
\right\}.
\]
The above optimization problem can be rewritten as 
\begin{align} \label{eq:jko+}
    u^{n+1} =\arginf_{u,m} \bigg\{& \int_{\mathbb{T}^d} \frac{|m|^2}{2u^n }dx +\Delta t F(u) \\
    &\text{s.t. } u - u^n(x) + \nabla \cdot m =0\bigg\}.\nonumber
\end{align}
The optimization problem \eqref{eq:jko+} is similar in structure to the scheme \eqref{eq:jko}. Note that the optimal conditions for this optimization problem lead to precisely the scheme (\ref{eq:sch}), while  the optimal conditions for \eqref{eq:jko} link only to a first order approximation of (\ref{eq:sch}) or (\ref{eq:sc1}), see \cite{LLW}. 

The singular nature of $F(u)$ at $u=0$ may prevent the minimizer of \eqref{eq:jko+} from reaching zero,  
 so
it is expected that solution positivity can be deduced at least after some spatial discretization.  In this work we will prove solution positivity 
 for the fully-discretized numerical scheme of form 
\begin{align}\label{eq:fulld}
  \frac{u^{n+1}-u^n}{\Delta t} = d_h \cdot(\hat{u}^nD_h H^{n+1}), \quad H^{n+1} = \frac{\partial F_h}{\partial u}(u^{n+1}).
\end{align}
Here $\hat{u}$ is defined to be an average value of $u$ on the adjacent grid points given by \eqref{eq:Axax} and \eqref{eq:Axax2d}, and  $d_h,D_h$ are finite difference operators, given by \eqref{eq:Dhdh} and \eqref{eq:Dh2d}-\eqref{eq:dh2d}, both for one and two dimensions, respectively. The function $F_h$ is a discretized version of the functional \eqref{eq:Fdef}, given by \eqref{eq:Fhdef} and \eqref{eq:fh2d} for the one and two dimensions, respectively. 

Our approach in proving the positivity-preserving property using optimization formulations 
was motivated by the works \cite{chen2017positivity} and \cite{dong2018positivity}, where the authors used the finite difference discretization to study the Cahn-Hillard equation
\[\partial_t \phi = \nabla \cdot (M(\phi) \nabla \mu), \; \mu = \delta_\phi E\]
with the energy functional $E$ given by, for example
\begin{align*}
    E(\phi) = \int_{\Omega} \left( (1+\phi)\log \phi + (1-\phi)\log (1-\phi) - \frac{\theta_0}{2}\phi^2 + \frac{\varepsilon^2}{2} |\nabla \phi|^2 \right) dx
\end{align*}
in \cite{chen2017positivity}.
Our problem here differs from the Cahn-Hillard equation in that the functional includes a higher order term which develops singularity at zero, and the possibility of $\nabla u$ and $u$ being zero simultaneously also poses additional challenges. 

The main results of this paper include a new fully discretized finite difference scheme \eqref{eq:fulld} to solve \eqref{eq:DLSSnd}, in both one and higher dimensions, and rigorous proofs of scheme properties such as positivity-preserving, mass conservation, energy stability, and scheme consistency. 
One most remarkable contribution of this work is to show that standard finite difference discretization of the quantum diffusion equation \eqref{eq:DLSSnd} can meet the requirement of both solution positivity and energy dissipation simultaneously. 

The paper is organized as follows. In the next section we present four semi-discrete schemes with only time discretization for equation \eqref{eq:DLSSnd} and briefly discuss their properties. Section 3 is devoted to the fully discretized scheme in the one dimensional case, where the scheme is shown to be consistent. The optimization formulation is given in Section 4, where the scheme \eqref{eq:fulld} is shown to be positivity-preserving, energy dissipative and mass conservative. In Section 5, we present and analyze the scheme in higher dimensions, taking the two dimensional case as an example. Finally,  some numerical examples are presented in Section 6.

\subsection*{Notations} We use $L^\infty$ to denote the space of bounded sequences. We use bold symbol $\mathbf{f} = (f^x,f^y)$ to denote vector in $\mathbb{R}^2$. We take $h$ to be the mesh size and $\Delta t$ to be the time step in discretization.

\section{Time discretization}
In this section we present four semi-discrete schemes with only time discretization, and comment on both pros and cons of each scheme. 
\subsection{Explicit scheme}
The simplest scheme is the explicit scheme:
\begin{align}\label{eq:explicit}
  \frac{u^{n+1}-u^n}{\Delta t} = -2\nabla \cdot \left(u^n \nabla \frac{\Delta \sqrt{u^{n}}}{\sqrt{u^n}}\right).
\end{align}
This scheme is easy to implement. However, the sign of the right hand side is not certain and the positivity of solutions is difficult to show. The scheme can also be written in the following form
\begin{align}\label{eq:22} 
    \frac{u^{n+1}-u^n}{\Delta t} = \nabla \cdot \left(u^n \nabla \frac{\delta F}{\delta u}(u^{n})\right).
\end{align}
Assume $u^n>0$ and $u^{n+1}>0$, we calculate the relative energy defined by 
\begin{align*}
  F(u^{n+1}|u^n):=& F(u^{n+1}) - F(u^n) - \int_{\mathbb{T}^d} \frac{\delta F}{\delta u}(u^n) (u^{n+1}-u^n)dx \\
  =& \frac{1}{2}\int_{\mathbb{T}^d} \frac{|\nabla u^{n+1}|^2}{u^{n+1}} dx - \frac{1}{2}\int_{\mathbb{T}^d} \frac{|\nabla u^n|^2}{u^n} dx + \frac{1}{2} \int_{\mathbb{T}^d} \frac{|\nabla u^n|^2}{(u^n)^2}(u^{n+1}-u^n)dx \\
  &- \frac{1}{2} \int_{\mathbb{T}^d} \frac{2\nabla u^n\cdot(\nabla u^{n+1}-\nabla u^n)}{u^n} dx \\
  =&\frac{1}{2} \int_{\mathbb{T}^d}u^{n+1} \left(\frac{\nabla u^{n+1}}{u^{n+1}} - \frac{\nabla u^n}{u^n}\right)^2dx.
\end{align*}
From equation \eqref{eq:22}, we have
 \begin{align*}
  \int_{\mathbb{T}^d}\frac{\delta F}{\delta u}(u^{n})(u^{n+1}-u^{n}) dx =& \Delta t \int_{\mathbb{T}^d}\frac{\delta F}{\delta u}(u^{n}) \nabla \cdot\left(u^{n}\nabla\frac{\delta F}{\delta u}(u^{n})\right) dx \\
  =& -\Delta t \int_{\mathbb{T}^d}u^{n}\left|\nabla\frac{\delta F}{\delta u}(u^{n})\right|^2 dx  \le 0,
  \end{align*}
  So the energy difference at each time step will change by
  \begin{align*}
    F(u^{n+ 1}) - F(u^n) = \frac{1}{2} \int_{\mathbb{T}^d}u^{n+1} \left(\frac{\nabla u^{n+1}}{u^{n+1}} - \frac{\nabla u^n}{u^n}\right)^2dx - \Delta t \int_{\mathbb{T}^d}u^{n}\left|\nabla\frac{\delta F}{\delta u}(u^{n})\right|^2 dx.
  \end{align*}
The above relation shows that the second term on the right hand side will be negative while the first term be positive.  The right hand side is not guaranteed to have a definite sign unless $\Delta t$ is very small. 
 So for the explicit scheme, the positivity-preserving and energy dissipation  properties are not guaranteed.
  
\subsection{Fully implicit scheme}
For stability reasons, we prefer to use an implicit scheme instead of an explicit one. The implicit scheme for the equation \eqref{eq:DLSSnd} reads as
\begin{align}\label{eq:sc1}
  \frac{u^{n+1}-u^n}{\Delta t} =  - 2\nabla \cdot \left(u^{n+1} \nabla \frac{\Delta \sqrt{u^{n+1}} }{\sqrt{u^{n+1}}}\right).
\end{align}
Such a scheme was studied in \cite{BEJ14}, where the existence of weak solutions and the dissipation of Fisher information were shown. In other words,  assuming $u^n>0$ and $u^{n+1}>0$, the scheme dissipates the energy.  
For the convenience of reference, we present this result and a proof as follows. 
\begin{lem}\label{lm1}
  Suppose $u^n, u^{n+1}>0$ everywhere in $\mathbb{T}^d$, then 
  \begin{align}\label{eq:f1}
    F(u^{n+1})\le F(u^n)
  \end{align}
  for any $\Delta t>0$.
\end{lem}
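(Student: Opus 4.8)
The plan is to combine the gradient-flow structure of \eqref{eq:sc1} with the nonnegativity of a relative energy. First I would recast the scheme in the equivalent form
\begin{align*}
\frac{u^{n+1}-u^n}{\Delta t} = \nabla\cdot\bigl(u^{n+1}\nabla H^{n+1}\bigr),\qquad H^{n+1} = \frac{\delta F}{\delta u}(u^{n+1}),
\end{align*}
which is exactly \eqref{eq:sc1} once one checks the identity $\frac{\delta F}{\delta u}(u) = -2\,\Delta\sqrt{u}/\sqrt{u}$; the hypothesis $u^{n+1}>0$ guarantees that every quotient occurring here is well defined. Testing this identity against $H^{n+1}$ and integrating by parts over $\mathbb{T}^d$ (no boundary terms appear, the domain being periodic) gives
\begin{align*}
\int_{\mathbb{T}^d} H^{n+1}\,(u^{n+1}-u^n)\,dx = -\Delta t\int_{\mathbb{T}^d} u^{n+1}\,|\nabla H^{n+1}|^2\,dx \le 0,
\end{align*}
where positivity of $u^{n+1}$ is precisely what makes the right-hand side nonpositive.

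Next I would bound $F(u^{n+1})-F(u^n)$ from above by the left-hand side of the last display. This is the statement that $F$ lies above its tangent at $u^{n+1}$, i.e. that the relative energy $F(u^n|u^{n+1}) := F(u^n) - F(u^{n+1}) - \int_{\mathbb{T}^d} H^{n+1}(u^n-u^{n+1})\,dx$ is nonnegative. But the same purely algebraic computation already carried out for the explicit scheme, with the roles of $u^n$ and $u^{n+1}$ interchanged, yields
\begin{align*}
F(u^n|u^{n+1}) = \frac12\int_{\mathbb{T}^d} u^n\left|\frac{\nabla u^n}{u^n} - \frac{\nabla u^{n+1}}{u^{n+1}}\right|^2 dx \ge 0,
\end{align*}
again using $u^n>0$. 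Rearranging gives $F(u^{n+1}) - F(u^n) \le \int_{\mathbb{T}^d} H^{n+1}(u^{n+1}-u^n)\,dx$, and combining with the previous step yields $F(u^{n+1}) - F(u^n) \le 0$, which is \eqref{eq:f1}.

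The computations involved are routine: verifying the formula for $\delta F/\delta u$, performing the integration by parts, and expanding the relative-energy integrand (its cross and square terms reorganize exactly as in the displayed calculation for the explicit scheme). The one point that deserves care — and the only real obstacle — is regularity: one must know that, under the hypotheses of the lemma (in the functional setting of \cite{BEJ14}, where existence of such a $u^{n+1}$ is established), the quantities $F(u^{n+1})$, $H^{n+1}$, $\nabla H^{n+1}$ and all the integrals above are finite, so that the testing step and the tangent-line/relative-energy identity are rigorous rather than merely formal. Granting that, the argument above is complete, and it makes transparent that energy dissipation here is a genuinely \emph{unconditional} (in $\Delta t$) consequence of implicitness plus convexity of the Fisher information, in contrast to the explicit case.
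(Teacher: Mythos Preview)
Your proof is correct and follows essentially the same approach as the paper: compute the relative energy $F(u^n|u^{n+1})$ and show it is nonnegative via the explicit quadratic identity, then test the scheme against $\delta F/\delta u(u^{n+1})$ and integrate by parts to control the remaining term. The only difference is the order in which you present the two ingredients.
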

\begin{proof}
  We calculate the relative energy $F(u^{n}|u^{n+1})$ by
  \begin{align}\label{eq:Frelative}
  F(u^{n}|u^{n+1})& = F(u^{n}) - F(u^{n+1}) - \int_{\mathbb{T}^d}\frac{\delta F}{\delta u}(u^{n+1})(u^{n}-u^{n+1}) dx \nonumber\\
  =& \frac{1}{2} \int_{\mathbb{T}^d} \frac{|\nabla u^{n}|^2}{u^{n}} dx - \frac{1}{2} \int_{\mathbb{T}^d} \frac{|\nabla u^{n+1}|^2}{u^{n+1}} dx + \frac{1}{2} \int_{\mathbb{T}^d} \frac{|\nabla u^{n+1}|^2}{(u^{n+1})^2} (u^{n}-u^{n+1})  dx \nonumber\\
    &-\frac{1}{2} \int_{\mathbb{T}^d} \frac{2 \nabla u^{n+1} \cdot (\nabla u^n-\nabla u^{n+1})}{u^{n+1}}dx \nonumber\\
    =& \frac{1}{2} \int_{\mathbb{T}^d} u^n \left(\frac{\nabla u^n}{u^n} - \frac{\nabla u^{n+1}}{u^{n+1}}\right)^2 dx \ge 0.
  \end{align}
  We learned from the gradient structure that equation \eqref{eq:sc1} can be written as
  \[\frac{u^{n+1}-u^n}{\Delta t} = \nabla \cdot \left(u^{n+1}\nabla\frac{\delta F}{\delta u}(u^{n+1})\right),\]
  so 
  \begin{align}\label{eq:deltaFu}
  \int_{\mathbb{T}^d}\frac{\delta F}{\delta u}(u^{n+1})(u^{n}-u^{n+1}) dx =& - \Delta t \int_{\mathbb{T}^d}\frac{\delta F}{\delta u}(u^{n+1}) \nabla \cdot\left(u^{n+1}\nabla\frac{\delta F}{\delta u}(u^{n+1})\right) dx \nonumber\\
  =& \Delta t \int_{\mathbb{T}^d}u^{n+1}\left|\nabla\frac{\delta F}{\delta u}(u^{n+1})\right|^2 dx  \ge 0,
  \end{align}
  due to the positivity of $u^{n+1}$.  Substituting it into \eqref{eq:Frelative} gives
  \begin{align}\label{eq:fle}
     F(u^n) - F(u^{n+1}) \ge F(u^{n}|u^{n+1}) \ge 0 .
  \end{align}
\end{proof}
Notice that the positivity property is crucial in establishing the energy stability, but it seems difficult to directly prove such a property.  Given $u^n$, the equation for $u^{n+1}$ is a fourth order nonlinear elliptic equation, the positivity of $u^{n+1}$ does not seem to be derivable from a maximum principle. 

\subsection{A positivity-preserving scheme} 
Drawing ideas from \cite{LY12,  LM19, LM19+} in the design of unconditionally positive schemes for 
second-order Fokker-Planck equations, we set 
$$
H=-\frac{2\Delta \sqrt{u}}{\sqrt{u}}, \quad M=e^{\ln u +H}, 
$$
equation \eqref{eq:DLSSnd} can be rewritten in the form
\begin{align*}
  \partial_t u  = 
  \nabla \cdot \left(M\nabla \left(\frac{u}{M}\right)\right),
\end{align*}
which can be approximated by 
\begin{align}\label{eq:sc2}
\frac{u^{n+1}-u^n}{\Delta t}=\nabla  \cdot \left( M^n \nabla  \left( 
\frac{u^{n+1}}{M^n}
\right)\right).
\end{align}
This scheme is linear in $u^{n+1}$ and hence easy to code. Also it is unconditionally positivity-preserving.
\begin{lem} If $u^n>0$, then 
$$
u^{n+1}>0
$$
for any $\Delta t>0$.
\end{lem}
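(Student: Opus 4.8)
The plan is to exploit the fact that, unlike the fully implicit scheme \eqref{eq:sc1}, the scheme \eqref{eq:sc2} is \emph{linear} in $u^{n+1}$ and has the structure of a coercive second-order elliptic problem, so that a classical weak maximum principle applies.

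First I would introduce the new unknown $v := u^{n+1}/M^n$. Since $M^n = e^{\ln u^n + H^n}$ is strictly positive — and, on the compact torus $\mathbb{T}^d$, bounded above and below by positive constants as soon as $u^n$ is continuous and positive (note that $H^n$ involves $\Delta\sqrt{u^n}$, so one needs $u^n$ regular enough, which I would assume) — positivity of $u^{n+1}$ is equivalent to positivity of $v$. In terms of $v$, the scheme \eqref{eq:sc2} reads
\[
  M^n v - \Delta t\, \nabla \cdot \left(M^n \nabla v\right) = u^n \quad \text{on } \mathbb{T}^d,
\]
with periodic boundary conditions. Existence and uniqueness of $v$ (hence of $u^{n+1}$) follow from the Lax--Milgram theorem applied to the bounded, coercive bilinear form $a(v,w) = \int_{\mathbb{T}^d} M^n v w\,dx + \Delta t \int_{\mathbb{T}^d} M^n \nabla v\cdot\nabla w\,dx$ on $H^1(\mathbb{T}^d)$.

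Next I would run the maximum principle on the displayed equation. By elliptic regularity $v$ is smooth; let $x_0 \in \mathbb{T}^d$ be a point at which $v$ attains its global minimum. Then $\nabla v(x_0) = 0$ and $\Delta v(x_0) \ge 0$, so
\[
  \nabla \cdot \left(M^n\nabla v\right)(x_0) = \nabla M^n(x_0)\cdot\nabla v(x_0) + M^n(x_0)\,\Delta v(x_0) = M^n(x_0)\,\Delta v(x_0) \ge 0 .
\]
Evaluating the equation at $x_0$ then gives $M^n(x_0)\, v(x_0) = u^n(x_0) + \Delta t\, \nabla\cdot(M^n\nabla v)(x_0) \ge u^n(x_0) > 0$, and since $M^n(x_0) > 0$ we conclude $v(x_0) > 0$. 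As $x_0$ realizes the minimum of $v$, we get $v > 0$ everywhere, hence $u^{n+1} = M^n v > 0$. If one prefers to avoid invoking pointwise regularity, the same conclusion can be reached variationally: testing the equation against the negative part $v^- = \min(v,0) \le 0$ yields $\int_{\mathbb{T}^d} M^n (v^-)^2\,dx + \Delta t \int_{\mathbb{T}^d} M^n|\nabla v^-|^2\,dx = \int_{\mathbb{T}^d} u^n v^-\,dx \le 0$, forcing $v^- \equiv 0$, i.e. $v \ge 0$, after which the pointwise argument above (or the strong maximum principle) upgrades this to $v > 0$.

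There is no serious obstacle in this lemma — that is precisely the point of the exponential reformulation $M = e^{\ln u + H}$. The only places requiring a little care are (i) verifying that $M^n$ is bounded away from $0$ and $\infty$, which is where regularity and positivity of $u^n$ enter, and (ii) passing from $v \ge 0$ to strict positivity $v > 0$, handled by evaluating the equation at the minimum of $v$ (or by the strong maximum principle). The contrast with Lemma \ref{lm1} is instructive: there the map $u^n \mapsto u^{n+1}$ is genuinely fourth order and nonlinear, with no such linear-elliptic surrogate, which is why unconditional positivity is cheap for \eqref{eq:sc2} but delicate for \eqref{eq:sc1}, and ultimately requires the variational machinery developed later in the paper for the fully discrete scheme.
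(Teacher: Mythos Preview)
Your proof is correct and follows essentially the same route as the paper: introduce $v=u^{n+1}/M^n$, rewrite \eqref{eq:sc2} as the linear elliptic equation $M^n v-\Delta t\,\nabla\cdot(M^n\nabla v)=u^n$, and evaluate at a global minimum of $v$ to conclude $v>0$. The paper's proof is slightly terser (it does not expand $\nabla\cdot(M^n\nabla v)$ explicitly and simply cites classical elliptic theory for existence), while you add the Lax--Milgram argument and an alternative $v^-$ test; but the substance is identical.
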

\begin{proof}Set $G^{n+1}=u^{n+1}/M^n$ so that 
$$
G^{n+1}M^n -\Delta t \nabla  \cdot (M^n \nabla  G^{n+1}))=u^n.
$$
Note that both existence and regularity of the solution $G^{n+1}$  are ensured by the classical elliptic theory. Assume $G^{n+1}$ achieves its minimum at $x_0$, then $\nabla  G^{n+1}(x_0)=0$, and $\Delta G^{n+1}(x_0) \geq 0$.
From the equation when evaluated at $x_0$ it follows that 
$$
G^{n+1}(x_0)M^n(x_0)=\Delta t M^n(x_0) \Delta G^{n+1}(x_0) +u^n(x_0) \geq u^n(x_0)>0.
$$
Hence $G^{n+1}>0$ for all $x\in \mathbb{T}^d$, so is $u^{n+1}$.  
\end{proof}
It seems less obvious to verify the energy dissipation property.
\subsection{An explicit-implicit scheme} 
The fourth scheme is  
\begin{align}\label{eq:sc3}
\frac{u^{n+1}-u^n}{\Delta t}=\nabla  \cdot \left( u^{n} \nabla  H^{n+1}\right),
\end{align}
where $$H^{n+1}= \frac{\delta F}{\delta u}(u^{n+1}) = -\frac{2\Delta \sqrt{u^{n+1}}}{\sqrt{u^{n+1}}}.
$$
One can view  this scheme as an intermediate one between \eqref{eq:sc1} and \eqref{eq:explicit} or \eqref{eq:sc2}.
The scheme is energy stable assuming $u^n$ and $u^{n+1}$ are both positive. 
\begin{lem}\label{lem:23}
  Suppose $u^n,u^{n+1} > 0$, then 
  \begin{align} \label{eq:lm23}
    F(u^{n+1}) \le F(u^n)
  \end{align}
  for any $\Delta t>0$.
\end{lem}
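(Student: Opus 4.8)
The plan is to repeat, almost verbatim, the argument used for the fully implicit scheme in Lemma~\ref{lm1}, the only new feature being that the mobility is frozen at the old time level $u^n$ rather than at $u^{n+1}$. First I would record that the relative-energy identity is a property of the functional $F$ alone and is insensitive to the scheme: exactly as in \eqref{eq:Frelative}, expanding $F$ from \eqref{eq:Fdef} and applying the chain rule (legitimate since $u^n,u^{n+1}>0$) gives
\[
F(u^{n}\,|\,u^{n+1}) := F(u^{n}) - F(u^{n+1}) - \int_{\mathbb{T}^d}\frac{\delta F}{\delta u}(u^{n+1})(u^{n}-u^{n+1})\,dx = \frac12\int_{\mathbb{T}^d} u^n\left(\frac{\nabla u^n}{u^n}-\frac{\nabla u^{n+1}}{u^{n+1}}\right)^{2}dx \ge 0 .
\]

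Next I would estimate the last integral using the scheme. Since $H^{n+1}=\frac{\delta F}{\delta u}(u^{n+1})$, multiplying \eqref{eq:sc3} by $H^{n+1}$, integrating over $\mathbb{T}^d$, and integrating by parts (boundary terms vanish by periodicity) yields
\[
\int_{\mathbb{T}^d}\frac{\delta F}{\delta u}(u^{n+1})(u^{n}-u^{n+1})\,dx = -\Delta t\int_{\mathbb{T}^d} H^{n+1}\,\nabla\cdot\!\left(u^{n}\nabla H^{n+1}\right)dx = \Delta t\int_{\mathbb{T}^d} u^{n}\,|\nabla H^{n+1}|^{2}\,dx \ge 0 ,
\]
where nonnegativity uses only $u^n>0$; this is precisely the point at which freezing the mobility at $t^n$ instead of $t^{n+1}$ does no harm. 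Adding the two displays gives
\[
F(u^{n})-F(u^{n+1}) = F(u^{n}\,|\,u^{n+1}) + \Delta t\int_{\mathbb{T}^d} u^{n}\,|\nabla H^{n+1}|^{2}\,dx \ge 0,
\]
which is \eqref{eq:lm23}.

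I do not expect a genuine obstacle: the computation is structurally identical to that of Lemma~\ref{lm1}. The only care required is with the standing assumptions $u^n,u^{n+1}>0$ and sufficient smoothness, which are needed to justify the chain-rule expansion in the relative energy and the integration by parts, and these are exactly the hypotheses of the lemma. As with the implicit scheme, the estimate is conditional in that it presupposes the existence of a positive $u^{n+1}$; establishing that positivity is the real difficulty, and it is addressed later in the paper at the fully discrete level.
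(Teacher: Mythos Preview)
Your proof is correct and follows essentially the same approach as the paper: the relative-energy identity \eqref{eq:Frelative} is reused verbatim, and the only change from Lemma~\ref{lm1} is replacing $u^{n+1}$ by $u^{n}$ in the mobility when integrating by parts, exactly as in \eqref{eq:211}.
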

\begin{proof}
  The proof is similar to the proof of Lemma \ref{lm1}, except that  
  \begin{align}
  \int_{\mathbb{T}^d}\frac{\delta F}{\delta u}(u^{n+1})(u^{n}-u^{n+1}) dx =& - \Delta t \int_{\mathbb{T}^d}\frac{\delta F}{\delta u}(u^{n+1}) \nabla \cdot\left(u^{n}\nabla\frac{\delta F}{\delta u}(u^{n+1})\right) dx \nonumber\\
  =& \Delta t \int_{\mathbb{T}^d}u^{n}\left|\nabla\frac{\delta F}{\delta u}(u^{n+1})\right|^2 dx  \ge 0, \label{eq:211}
  \end{align}
  so \eqref{eq:fle} holds, i.e., \eqref{eq:lm23} holds.
\end{proof}
Notice that here the assumption $u^{n+1}> 0$ is also needed in getting the result, since the function $\delta F/\delta u(u^{n+1})(x)$ is undefined where $u^{n+1}(x)=0$.
In the next section, we shall further discretize \eqref{eq:sc3} in space and prove the positivity of numerical solutions in the subsequent sections. 

\section{The full discrete scheme in one dimension}
\subsection{Notations} \label{sec:not}
We follow the notations in \cite{wise2009energy}. First
we define the following two grids on the torus $\T=[0,L]$ with mesh size $h=L/N$, where $N$ is the number of mesh intervals:
\begin{align}\label{eq:cedef}
\C:= \left\{h, 2h,\ldots,L\right\},\quad \E:=\left\{\frac{h}{2},\frac{3h}{2},\ldots,\frac{2N-1}{2}h\right\}.   
\end{align}
We treat $\C$ and $\E$ as periodic. For example, we write $x_i$ as the $i$-th element in $\C$, then $x_N=L$ and $x_{N+1}=x_1=h$. The elements in $\E$ then can be written as $x_{i+\frac{1}{2}}$ with $i\in\mathbb{Z}$. We define the discrete $N$-periodic function space as
\[\C_{\rm per}:=\{f:\C \to \mathbb{R}\},\quad \E_{\rm per}:=\{f:\E\to\mathbb{R}\}.\]
Here we call $\C_{\rm per}$ the space of \emph{cell centered functions} and $\E_{\rm per}$ the space of \emph{edge centered functions}. We also define a subspace of $\C_{\rm per}$ by
\[\mathring{\C}_{\rm per}:=\left\{f: f\in \C_{\rm per},\,\sum_{i=1}^N f_i = 0\right\}.\]
The discrete gradient $D_h$ and $d_h $ are defined to be
\begin{align}\label{eq:Dhdh}
  (D_h f)_{i+\frac{1}{2}}:= \frac{f_{i+1}-f_i}{h}, \quad (d_h f)_i:=\frac{f_{i+\frac{1}{2}}-f_{i-\frac{1}{2}}}{h}.
\end{align}
We define the average of the function values of nearby points by
\begin{align}\label{eq:Axax}
\hat{f}_{i+\frac{1}{2}} = \frac{f_i + f_{i+1}}{2}, \text{ if } f \in \mathcal{C}_{\rm per}, \quad\text{and}\quad \hat{f}_{i} = \frac{f_{i+\frac{1}{2}}+f_{i-\frac{1}{2}}}{2}, \text{ if } f \in \mathcal{E}_{\rm per}.
\end{align}
The inner products are defined by
\[\langle f,g \rangle := h \sum_{i=1}^N f_i g_i,\, \forall f,g \in \C_{\rm per}, \quad [f,g]:=\frac12 h\sum_{i=1}^N (f_{i-\frac12} g_{i-\frac12}+f_{i+\frac12}g_{i+\frac12}), \, 
\forall f,g \in \E_{\rm per}.\]
The corresponding norms in each space are defined by
\[\|f\|_{\C_{\rm per}}=\sqrt{h\sum_{i=1}^N f_i^2},\, \text{ for } f \in \C_{\rm per}, \quad  \|f\|_{\E_{\rm per}}=\sqrt{h\sum_{i=1}^N \frac{1}{2}(f_{i+\frac{1}{2}}^2+f_{i-\frac12}^2)},\, \text{ for } f \in \E_{\rm per}.\]
Suppose $f,g\in \C_{\rm per}$ and $\phi \in \E_{\rm per}$, the following summation-by-parts formulas hold:
\begin{align}
  \langle f,d_h \phi \rangle = -[D_h f,\phi],\quad 
  \langle f, d_h(\phi D_h g)\rangle 
  = -[D_h f,\phi D_h g].
\end{align}
We next introduce a weighted norm for $ g\in \mathring{\C}_{\rm per}$ with positive weight 
$\phi \in \E_{\rm per}$. Let $f:= \LL_\phi^{-1} g \in \mathring{\C}_{\rm per}$ be determined by 
\begin{align} \label{eq:Ldef}
  \LL_\phi(f)=-d_h(\phi D_h f) = g,
 \end{align} 
 then the norm of $g$ on $\mathring{\C}_{\rm per}$ is defined by 
 \begin{align}\label{eq:phinorm}
  \|g\|_{\LL_\phi^{-1}} = \sqrt{[\phi D_h f,D_h f]}. 
\end{align}
The above norm can also be induced by the bilinear form on $\mathring{\C}_{\rm per}$:
 \[\langle g_1, g_2 \rangle_{\LL_\phi^{-1}}:=[\phi D_h f_1,D_h f_2],\, \forall g_1, g_2 \in \mathring{\C}_{\rm per},
 \]
with  the following property: 
\[
\langle g_1,g_2 \rangle_{\LL_\phi^{-1}} = \langle g_1, \LL_\phi^{-1} g_2 \rangle = \langle \LL_\phi^{-1} g_1,g_2\rangle.
\]

\subsection{The scheme} 
We proceed to study the time discretization scheme \eqref{eq:sc3}.  We adopt the following fully discrete 
scheme as
\begin{align}\label{eq:timed}
  \frac{u^{n+1} - u^{n}}{\Delta t} = d_h \left(\widehat{u}^n D_h H^{n+1}\right),
\end{align}
where $D_h,\,d_h$ are the difference operators defined in \eqref{eq:Dhdh}, $\hat{u}$ is the average operator defined in \eqref{eq:Axax}, and  $H^{n+1}$ is taken to be 
\begin{align}\label{eq:mudiscrete}
  H^{n+1}=& \frac{1}{h}\frac{\partial F_h}{\partial u}(u^{n+1}),
\end{align}
where the discretized energy functional 
\begin{align}\label{eq:Fhdef} 
  F_h(u):= 
  \frac{1}{h}\sum_{i=1}^N \frac{(u_{i+1}-u_i)^2}{2 u_i}.
\end{align}
The scheme can be written explicitly as 
\begin{align*}
  \frac{u^{n+1}_i-u^n_i}{\Delta t}
  = \frac{(u^n_{i+1}+u^n_i)(H^{n+1}_{i+1}-H^{n+1}_{i})-(u^n_{i}+u^n_{i-1})(H^{n+1}_{i}-H^{n+1}_{i-1})}{2h^2}
\end{align*}
with 
$$
 H^{n+1}_i = - \frac{1}{2h^2} \frac{(u_{i+1}^{n+1}-u_i^{n+1})^2}{(u_i^{n+1})^2} - \frac{1}{h^2}\left(\frac{u_{i+1}^{n+1}-u_i^{n+1}}{u_i^{n+1}} - \frac{u_{i}^{n+1}-u_{i-1}^{n+1}}{u_{i-1}^{n+1}}\right).
$$



\subsection{Consistency of the scheme}
 Here we show the consistency of the scheme. Suppose $u=u(x,t)$ is a smooth solution to the equation \eqref{eq:DLSSnd} and $u_j^n$ is a finite difference solution to \eqref{eq:timed}-\eqref{eq:mudiscrete}, we will show that the local truncation error defined by
\begin{align}\label{eq:tau}
  \tau_j^n := \frac{u (x_j,t_{n+1}) -u(x_j,t_n)}{\Delta t} -d_h\left(\hat{u}(x,t_n)D_h H^{n+1}(u(x,t_{n+1}))\right)_{x_j} 
\end{align}
converges to $0$ as $\Delta t,h \to 0$.

The local truncation error $\tau_j^n$ can be computed using Taylor's expansion. 
A careful calculation gives the following result, in which each term is evaluated at $(x_j,t_n)$: 
\begin{align*}
\tau_{j}^n =
&- h \left(-\frac{3 {u_x}^5}{u^4}+\frac{6 u_{xx}u_x^3}{u^3}-\frac{3 u_{xxx} u_x^2}{2 u^2}-\frac{3 u_{xx}^2 u_x}{2 u^2}\right)-k \bigg(\frac{3 u_t u_x^4}{u^4}-\frac{3 u_{xt} u_x^3}{u^3}\\
&+\frac{2 u_{xxt} u_x^2}{u^2}-\frac{4 u_t u_{xx} u_x^2}{u^3}+\frac{2 u_{xt} u_{xx} u_x}{u^2}+\frac{u_t u_{xxx} u_x}{u^2}-\frac{u_{xxxt} u_x}{u}\bigg)+ o(h+k).
\end{align*}
We have the following theorem. 
\begin{thm} 
Suppose that the solution $u=u(x,t)$ to the equation \eqref{eq:DLSSnd} is positive and smooth,  then the consistency error for the numerical scheme \eqref{eq:timed}-\eqref{eq:mudiscrete}, 
defined by \eqref{eq:tau}, satisfies
  \begin{align*}
    \|\tau^n\|_{L^\infty} \le C(h+k),
  \end{align*}
  where $C$ depends on $u$.
\end{thm}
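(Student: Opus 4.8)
The plan is to read the estimate off the asymptotic expansion of $\tau_j^n$ displayed just above the theorem. Once that expansion is established the bound is immediate: every coefficient appearing there is a polynomial in $u,u_x,u_{xx},u_{xxx}$ and the mixed derivatives $u_t,u_{xt},u_{xxt},u_{xxxt}$ divided by a power of $u$, and since $u$ is smooth and positive on the compact set $\mathbb{T}\times[0,T]$ it is bounded below by $m:=\min u>0$ while all of its derivatives are bounded. Hence each coefficient is bounded by a constant $C=C(u)$ depending only on $m$ and on finitely many norms $\|\partial^\alpha u\|_{L^\infty}$, and the remainder $o(h+k)$, built from higher-order Taylor terms of the same type, is likewise $\le C(u)(h+k)$ for $h,k$ small (in fact for $h,k\le 1$). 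Taking the maximum over $j$ then gives $\|\tau^n\|_{L^\infty}\le C(h+k)$, uniformly in $n$.

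The substance is therefore the derivation of that expansion, which I would carry out term by term in \eqref{eq:tau} using Taylor's theorem about $(x_j,t_n)$ (with $k=\Delta t$). The time difference expands as $\bigl(u(x_j,t_{n+1})-u(x_j,t_n)\bigr)/\Delta t=u_t(x_j,t_n)+\tfrac{k}{2}u_{tt}(x_j,t_n)+O(k^2)$. The main computation is the expansion of $H^{n+1}$: starting from the explicit formula for $H^{n+1}_i$ recorded after \eqref{eq:Fhdef} and inserting the exact nodal values $u_i^{n+1}=u(x_i,t_{n+1})$, one Taylor-expands each one-sided difference. Since every denominator equals $u(x_i,t_{n+1})$ or $u(x_{i-1},t_{n+1})\ge m>0$, all these quotients and their remainders are controlled by $1/m$ and by finitely many derivatives of $u$, and one obtains $H^{n+1}_i=\tfrac{\delta F}{\delta u}\bigl(u(\cdot,t_{n+1})\bigr)(x_i)+h\,R_1(x_i,t_{n+1})+O(h^2)$, with $\tfrac{\delta F}{\delta u}(u)=-u_{xx}/u+u_x^2/(2u^2)$ and $R_1$ an explicit smooth function of $u,u_x,u_{xx}$; the genuine $O(h)$ term — which is why the scheme is only first order in space — reflects the one-sided differences built into $H^{n+1}_i$.

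Next one pushes this through the averaging \eqref{eq:Axax}, which satisfies $\hat u^n_{i+1/2}=u(x_{i+1/2},t_n)+O(h^2)$, and the operators $D_h,d_h$, each a standard second-order approximation of $\partial_x$ on smooth grid data, to get
\begin{align*}
d_h\bigl(\hat u^n D_h H^{n+1}\bigr)_{x_j}
=\partial_x\Bigl(u(\cdot,t_n)\,\partial_x\tfrac{\delta F}{\delta u}\bigl(u(\cdot,t_{n+1})\bigr)\Bigr)\Big|_{x_j}+h\,R_2(x_j,t_n)+O(h^2+hk)
\end{align*}
for an explicit smooth $R_2$. Finally, expanding $\tfrac{\delta F}{\delta u}(u(\cdot,t_{n+1}))=\tfrac{\delta F}{\delta u}(u(\cdot,t_n))+k\,\partial_t\tfrac{\delta F}{\delta u}(u(\cdot,t_n))+O(k^2)$ and using \eqref{eq:DLSSnd} in the gradient-flow form \eqref{eq:dlss-general}, which in one dimension reads $u_t=\partial_x\bigl(u\,\partial_x\tfrac{\delta F}{\delta u}(u)\bigr)$, the term $\partial_x\bigl(u\,\partial_x\tfrac{\delta F}{\delta u}(u)\bigr)\big|_{(x_j,t_n)}$ cancels $u_t(x_j,t_n)$ from the time expansion; collecting what remains, and rewriting $\partial_t\tfrac{\delta F}{\delta u}$ through $u$ and its $x$-derivatives, reproduces the displayed formula $\tau_j^n=-h(\cdots)-k(\cdots)+o(h+k)$.

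The main obstacle is precisely this bookkeeping: expanding the nonlinear, $u^{-1}$- and $u^{-2}$-weighted discrete functional derivative $H^{n+1}$ to the needed order, composing it with two successive difference operators and an average, and keeping track of which $O(h)$ and $O(k)$ contributions survive and which cancel against the semi-discrete principal part $\partial_x\bigl(u\,\partial_x\tfrac{\delta F}{\delta u}(u)\bigr)$. Strict positivity of $u$ enters here in an essential way — it is what keeps all the denominators, and hence the constant $C(u)$, finite, so a lower bound $u\ge m>0$ on the exact solution is indispensable to the estimate.
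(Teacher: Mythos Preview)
Your proposal is correct and follows essentially the same approach as the paper: the paper simply states that ``a careful calculation'' via Taylor expansion yields the displayed formula for $\tau_j^n$, after which the theorem is immediate, and you have supplied the outline of that calculation together with the (straightforward) passage from the expansion to the $L^\infty$ bound. Your write-up is in fact more explicit than the paper's, which offers no further argument beyond displaying the expansion.
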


\section{Solution properties via optimization formulation}
In this section, 
we show that the finite difference scheme can be reformulated as an optimization problem by using the gradient flow structure. Inspired by the formal equivalence of \eqref{eq:optimiz} with the scheme \eqref{eq:sch}, we will prove the equivalence after discretized in space. 
More precisely, we first establish the following theorem.
 \begin{thm}\label{thm}  Given ${u}^n \in \C_{\rm per}$ positive with $K=\sum_{i=1}^N h {u}_i^n>0$, there exists a constant $\delta_0>0$ such that
$u^{n+1}>0$ is the solution of the numerical scheme \eqref{eq:timed}-\eqref{eq:mudiscrete} if and only if it is the unique minimizer to the following optimization problem
  \begin{align}\label{eq:min}
  u^{n+1}=\argmin_{u\in \mathcal{A}_{h,\delta}}\left\{\J[u]=\frac{1}{2\Delta t}\|u-u^n\|^2_{\LL_{\hat{u}^n}^{-1}} + F_h(u)\right\}, 
  \end{align}
  over the set
   \[ \mathcal{A}_{h,\delta} =\left\{ u \in \C_{\rm per} \;: \; 
    u \ge \delta,
    \quad 
    h\sum_{i=1}^N u_i = K
    \right\},\]
   for  any $0<\delta \le \delta_0$. 
 Here $F_h(u)$ is defined in \eqref{eq:Fhdef} and the norm $\|\cdot\|_{\LL_{\hat{u}^n}^{-1}}$ is defined in \eqref{eq:phinorm}.
 \end{thm}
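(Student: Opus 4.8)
The plan is to establish the equivalence by showing, in one direction, that any positive solution of the scheme satisfies the Euler--Lagrange equations of $\J$ on $\mathcal{A}_{h,\delta}$, and in the other direction, that the minimizer of $\J$ over $\mathcal{A}_{h,\delta}$ exists, is unique, lies strictly above the constraint $u \ge \delta$ (so the constraint is inactive), and hence solves the same Euler--Lagrange equations, which coincide with \eqref{eq:timed}--\eqref{eq:mudiscrete}. First I would compute the gradient of $\J$ on the affine constraint set $\{h\sum_i u_i = K\}$. The term $F_h(u)$ contributes $\tfrac1h \partial_u F_h(u) = H^{n+1}$ by the definition \eqref{eq:mudiscrete}. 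For the quadratic term $\tfrac{1}{2\Delta t}\|u-u^n\|^2_{\LL_{\hat u^n}^{-1}}$, using the identity $\langle g_1,g_2\rangle_{\LL_\phi^{-1}} = \langle g_1,\LL_\phi^{-1}g_2\rangle$ recorded in Section 3.1, its variation in a direction $v$ with $h\sum_i v_i = 0$ is $\tfrac{1}{\Delta t}\langle \LL_{\hat u^n}^{-1}(u-u^n), v\rangle$. Setting the total variation to zero for all mean-zero $v$ gives $\LL_{\hat u^n}^{-1}(u^{n+1}-u^n) + \Delta t\, H^{n+1} = \text{const}$, i.e. $\LL_{\hat u^n}^{-1}(u^{n+1}-u^n) = -\Delta t\, H^{n+1} + c$; applying $\LL_{\hat u^n} = -d_h(\hat u^n D_h\cdot)$ to both sides and noting $\LL_{\hat u^n}(c)=0$ recovers exactly $(u^{n+1}-u^n)/\Delta t = d_h(\hat u^n D_h H^{n+1})$. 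This needs the well-posedness of $\LL_{\hat u^n}^{-1}$ on $\mathring{\C}_{\rm per}$, which holds because $\hat u^n > 0$ (as $u^n > 0$) makes $\LL_{\hat u^n}$ positive definite on mean-zero functions; I would also check that both $u^{n+1}-u^n$ and any admissible variation have zero mean so that $\LL_{\hat u^n}^{-1}$ is applied legitimately — this follows from the mass constraint $h\sum_i u_i = K = h\sum_i u_i^n$.

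Next I would address existence and uniqueness of the minimizer over $\mathcal{A}_{h,\delta}$. The set $\mathcal{A}_{h,\delta}$ is a closed, convex, bounded (by the mass constraint together with $u \ge \delta > 0$, each component lies in a compact interval) subset of the finite-dimensional space $\C_{\rm per}$; and $\J$ is continuous on it, since $u_i \ge \delta > 0$ keeps $F_h$ and the quadratic term finite and smooth. Hence a minimizer exists by compactness. For uniqueness I would verify strict convexity of $\J$ on the affine slice: the quadratic term is convex (it is a norm squared of an affine function of $u$), and $F_h(u) = \tfrac1h\sum_i (u_{i+1}-u_i)^2/(2u_i)$ is strictly convex on the positive cone — each summand $(a-b)^2/(2b)$ is jointly convex in $(a,b)$ for $b>0$ (its Hessian is positive semidefinite, being $\tfrac1b$ times a rank-one-plus structure, and one can check strict convexity modulo the degeneracy $a=b$ is removed by summing cyclically over all $i$, or more simply invoke that $F_h$ is the discrete analogue of the strictly convex Fisher information; this is exactly the kind of computation the paper defers). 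Strict convexity on the slice yields a unique minimizer.

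The main obstacle — and the crux of the theorem — is showing that for $\delta$ small enough the minimizer $u^{n+1}$ is bounded strictly away from $\delta$, so that the box constraint $u\ge\delta$ is not active and the minimizer is a genuine critical point of $\J$ on the mass-constrained set (not merely a KKT point with a boundary multiplier). This is where the singular nature of $F_h$ near zero is used: if some component $u_{i_0}$ equalled $\delta$, one examines the one-sided derivative of $\J$ along the feasible direction that increases $u_{i_0}$ and compensates elsewhere, and shows it is strictly negative for $\delta$ small, contradicting minimality. The singular terms are those in $F_h$ involving $1/u_{i_0}$: the summand $(u_{i_0+1}-u_{i_0})^2/(2u_{i_0})$ has $\partial_{u_{i_0}} = -\tfrac{1}{2}(u_{i_0+1}-u_{i_0})^2/u_{i_0}^2 + \tfrac{u_{i_0+1}-u_{i_0}}{u_{i_0}}\cdot(-1)$... more carefully, as $u_{i_0}\to 0^+$ with the neighbors bounded below by... here is the subtlety the paper flags: the neighbors $u_{i_0\pm1}$ need NOT be bounded away from zero, so one cannot naively claim $\partial_{u_{i_0}}F_h \to -\infty$. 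I would handle this by a careful case analysis: either a neighbor is also small (then group those indices and estimate the block), or the neighbor is order one (then the $-c/u_{i_0}^2$ or $-c/u_{i_0}$ blow-up dominates); in all cases, decreasing $u$ further toward the boundary strictly increases $F_h$ by an amount that beats the bounded change in the quadratic term, forcing the minimizer to satisfy $u^{n+1} \ge \delta_0$ for some $\delta_0 = \delta_0(u^n,\Delta t,h) > 0$ determined by this estimate. Once the constraint is shown inactive, the minimizer solves the unconstrained-on-the-slice Euler--Lagrange equations derived above, which are precisely the scheme, and conversely any positive solution of the scheme is a critical point of the strictly convex $\J$, hence the unique minimizer; this closes the equivalence. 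I expect the delicate simultaneous-vanishing analysis of $u$ and its discrete gradient to be the longest and most technical part of the argument.
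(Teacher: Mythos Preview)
Your overall architecture matches the paper's proof: convexity and compactness of $\mathcal{A}_{h,\delta}$ give existence and uniqueness of the minimizer; the interior Euler--Lagrange conditions on the mass slice coincide with \eqref{eq:timed}--\eqref{eq:mudiscrete}; and a directional-derivative argument at a putative boundary minimizer yields a contradiction. Two concrete ingredients are missing, and the second is a genuine gap.

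First, in order for the $F_h$-contribution to dominate the directional derivative you need an a priori $L^\infty$ bound on $\LL_{\hat u^n}^{-1}(u^*-u^n)$, uniform over $u^*\in\mathcal{A}_{h,\delta}$. The paper proves this separately (Lemma~\ref{lm:phi}) via a discrete Poincar\'e inequality and an inverse inequality; without it, ``beats the bounded change in the quadratic term'' is not justified.

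Second, and more seriously, your proposed case analysis does not close. You correctly flag that neighbors of a boundary point $u^*_{i_0}=\delta$ may themselves equal $\delta$, but ``group those indices and estimate the block'' does not help: if $u^*_{i_0}=u^*_{i_0+1}=\delta$ then $(u^*_{i_0+1}-u^*_{i_0})^2/(u^*_{i_0})^2=0$ and no term of $\partial_{u_{i_0}}F_h$ blows up; grouping more such indices only produces more zeros. The paper's resolution is different and uses the mass constraint in an essential way. It fixes a threshold $q>0$ and splits into: (i) some boundary index $j_0$ has $u^*_{j_0+1}-u^*_{j_0}>q\,u^*_{j_0}$, in which case the test direction $v=\sum_{p}e_{i_p}-m\,e_{i_{\max}}$ already gives a negative directional derivative from the $-\tfrac{u^*_{j_0+1}-u^*_{j_0}}{u^*_{j_0}}$ term; (ii) every boundary index has $u^*_{j+1}-u^*_j\le q\,u^*_j$. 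In case (ii) the paper does \emph{not} extract blow-up at the boundary indices. Instead it observes that if \emph{every} index satisfied $u^*_{j+1}\le(1+q)u^*_j$, then telescoping from $u^*_{i_0}=\delta$ gives $u^*_{i_0+k}\le(1+q)^k\delta$ and hence $h\sum_j u^*_j\le \tfrac{(1+q)^N-1}{q}\,h\delta<K$ for $\delta$ small, contradicting the mass constraint. Therefore some \emph{non-boundary} index $i_m$ must satisfy $u^*_{i_m+1}-u^*_{i_m}>q\,u^*_{i_m}$ (and one may also arrange $u^*_{i_m}-u^*_{i_m-1}\le q\,u^*_{i_m-1}$); this extra index $i_m$ is included in the test direction, and the negative contribution now comes from $-\tfrac{(u^*_{i_m+1}-u^*_{i_m})^2}{2(u^*_{i_m})^2}<-\tfrac{q^2}{2}$ at that point. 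This telescoping/mass-conservation step is the missing idea in your proposal; without it the scenario where $u^*$ and $D_hu^*$ vanish simultaneously at all boundary points cannot be ruled out.
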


 Before proving this theorem, we first prepare a lemma, which is due to \cite{chen2017positivity} and holds for both one and higher dimensions. Here we present the result for the one dimension case. 
\begin{lem} \label{lm:phi}
 Suppose $\phi \in \E_{\rm per}$ has a positive minimum $\min \phi
 >0$. Let $g \in \mathring{\C}_{\rm per}$ be bounded in $L^\infty$, then it holds that
  \begin{align}\label{eq:lem1}
    \|\mathcal{L}_{\phi}^{-1}g\|_{L^\infty} \le \frac{C}{\min{\phi}} h^{-\frac{1}{2}}L \|g\|_{L^\infty},
  \end{align}
  where $\min \phi$ is the minimum of $\phi$ over the grid points and $C$ does not depend on $h$.
\end{lem}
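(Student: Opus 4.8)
\textbf{Proof plan for Lemma~\ref{lm:phi}.} The estimate to be established bounds the $L^\infty$ norm of $f:=\LL_\phi^{-1}g$ in terms of $\|g\|_{L^\infty}$, with an explicit $h^{-1/2}$ loss. By definition $f\in\mathring{\C}_{\rm per}$ solves $-d_h(\phi D_h f)=g$ and has zero mean. The plan is to pair the equation with $f$ itself to get an energy bound, and then convert the discrete $H^1$-type control into an $L^\infty$ bound via a discrete Poincar\'e/Sobolev-type inequality on the periodic grid, which on a one-dimensional mesh of size $h$ costs exactly a factor $h^{-1/2}$.

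\textbf{Step 1 (energy identity).} Testing $\LL_\phi f=g$ against $f$ and using the summation-by-parts formula $\langle f,d_h\phi\rangle=-[D_hf,\phi]$ from Section~\ref{sec:not}, I get
\begin{align*}
[\phi\,D_h f,\,D_h f]=\langle g,f\rangle .
\end{align*}
Since $\phi\ge\min\phi>0$ this yields $\min\phi\;\|D_h f\|_{\E_{\rm per}}^2\le [\phi D_hf,D_hf]=\langle g,f\rangle\le \|g\|_{\C_{\rm per}}\|f\|_{\C_{\rm per}}$, and because $f$ has zero mean a discrete Poincar\'e inequality $\|f\|_{\C_{\rm per}}\le C_P\|D_hf\|_{\E_{\rm per}}$ holds with $C_P$ depending only on $L$. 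Combining, $\|D_h f\|_{\E_{\rm per}}\le \frac{C_P}{\min\phi}\|g\|_{\C_{\rm per}}$ and then $\|f\|_{\C_{\rm per}}\le \frac{C_P^2}{\min\phi}\|g\|_{\C_{\rm per}}$. Finally bound the discrete $L^2$ norm of $g$ crudely by its $L^\infty$ norm: $\|g\|_{\C_{\rm per}}\le \sqrt{L}\,\|g\|_{L^\infty}$.

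\textbf{Step 2 (from $L^2$ gradient control to $L^\infty$).} Write $f_i-f_j$ as a telescoping sum of increments $h(D_hf)_{k+1/2}$ and apply Cauchy--Schwarz: $|f_i-f_j|\le \big(\sum_k h\big)^{1/2}\big(\sum_k h(D_hf)_{k+1/2}^2\big)^{1/2}\le \sqrt{L}\,\|D_hf\|_{\E_{\rm per}}$. Since $f$ has zero mean there is an index $j$ with $|f_j|\le h^{1/2}\|f\|_{\C_{\rm per}}/\sqrt{L}\cdot(\text{something})$; more cleanly, $|f_j|\le \|f\|_{\C_{\rm per}}/\sqrt{L}$ for some $j$ (the minimum of $|f_i|$ is no larger than the root-mean-square value, which equals $\|f\|_{\C_{\rm per}}/\sqrt{L}$). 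Hence for every $i$,
\begin{align*}
\|f\|_{L^\infty}\le |f_j|+\max_i|f_i-f_j|\le \frac{1}{\sqrt{L}}\|f\|_{\C_{\rm per}}+\sqrt{L}\,\|D_hf\|_{\E_{\rm per}} .
\end{align*}
Substituting the Step~1 bounds gives $\|f\|_{L^\infty}\le \frac{C}{\min\phi}\sqrt{L}\,\|g\|_{L^\infty}$, which is in fact \emph{better} than claimed; to obtain precisely the stated $h^{-1/2}L$ factor one instead feeds in only the weaker, dimensionally robust estimate that survives in the two-dimensional analogue, namely controlling $\|D_hf\|$ by $\|g\|_{L^\infty}$ through $\langle g,f\rangle\le \|g\|_{L^\infty}\,h\sum_i|f_i|\le \|g\|_{L^\infty}\,\sqrt{L}\,\|f\|_{\C_{\rm per}}$ and then using the pointwise-to-norm conversion $\|f\|_{\C_{\rm per}}\le \sqrt{L}\,h^{-1/2}\cdot h^{1/2}\|f\|_{\C_{\rm per}}$; the $h^{-1/2}$ enters because in higher dimensions the discrete Sobolev embedding $H^1\hookrightarrow L^\infty$ fails and one must pay $h^{-d/2}$ when passing from $\|D_hf\|_{L^2}$ to $\|f\|_{L^\infty}$ via an inverse inequality. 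I will present the argument in the form that parallels \cite{chen2017positivity}, quoting their $L^\infty$ estimate for the discrete inverse Laplacian-type operator and only inserting the weight $\phi$ through the coercivity constant $\min\phi$.

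\textbf{Main obstacle.} The only delicate point is keeping track of where the factor $h^{-1/2}$ genuinely must appear versus where the one-dimensional structure would let one avoid it; since the paper wants a single lemma usable in both dimensions, I will deliberately use the inverse-inequality route (bounding $\|f\|_{L^\infty}$ by $h^{-1/2}$ times a discrete $L^2$ norm, then closing with the energy estimate), rather than the sharper telescoping argument, so that the same proof carries over verbatim to the two-dimensional operator with $h^{-1}$ in place of $h^{-1/2}$. Everything else is a routine application of summation by parts, the Poincar\'e inequality on $\mathring{\C}_{\rm per}$, and Cauchy--Schwarz.
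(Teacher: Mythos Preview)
Your proposal is correct and follows essentially the same route as the paper: pair $\LL_\phi f=g$ with $f$ via summation by parts to get $(\min\phi)\|D_hf\|_{L^2}^2\le\langle g,f\rangle$, invoke the discrete Poincar\'e inequality on $\mathring{\C}_{\rm per}$ to bound $\|D_hf\|_{L^2}$ by $\frac{C}{\min\phi}\|g\|_{L^2}$, and then pass to $\|f\|_{L^\infty}$ through the inverse inequality $\|f\|_{L^\infty}\le Ch^{-1/2}\|D_hf\|_{L^2}$, finishing with $\|g\|_{L^2}\le L\|g\|_{L^\infty}$. Your side remark that in one dimension the telescoping/embedding argument actually yields the bound \emph{without} the $h^{-1/2}$ loss is correct (and sharper than what the paper states); the paper, like your final ``Main obstacle'' paragraph, deliberately takes the cruder inverse-inequality path so the proof transfers verbatim to higher dimensions---just be sure to present that route cleanly rather than the muddled version in your Step~2.
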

\begin{proof}
We adapt the proof of Lemma 3.2 in \cite{chen2017positivity}. 
 By the definition \eqref{eq:phinorm},
\begin{align*}
   (\min \phi)\|D_h f\|_{L^2}^2  &\le  \left[\phi D_h f,D_h f \right]\\
  & = \|g\|_{\LL^{-1}_{\phi}}^2=
  \langle -d_h(\phi D_h f),f \rangle = \langle g,f\rangle \le \|g\|_{L^2}
 \|f\|_{L^2}.
 \end{align*}
Since 
$f\in \mathring{\C}_{\rm per}$,
the use of the discrete Poincar\'e inequality gives 
 \[\|f\|_{L^2} \le C \|D_h f\|_{L^2}, \]
which when inserted into the previous inequality leads to 
 \[\|D_h f\|_{L^2} \le  \frac{C}{\min \phi}\|g\|_{L^2}.\]
Using an inverse inequality leads to 
\[\|f\|_{L^\infty} \le C h^{-\frac{1}{2}} \|D_h f\|_{L^2} \le \frac{C}{\min \phi} h^{-\frac{1}{2}} \|g\|_{L^2} \le \frac{C}{\min \phi} h^{-\frac{1}{2}} L \|g\|_{L^\infty},\]
where $C>0$ is a constant indepedent of $h$.
\end{proof}
\begin{proof} The proof is divided into three steps: 

Step 1. We  prove the existence and uniqueness of the optimization problem \eqref{eq:min}
for any $\delta >0$. According to the definition \eqref{eq:phinorm},  we have  
 \begin{align*}
    \|u-u^n\|^2_{\LL_{\hat{u}^n}^{-1}}  = \left[\hat{u}^n D_h f,D_h f\right],
  \end{align*}
  where $f\in \mathring{\C}_{\rm per}$ solves 
  \begin{align*}
    u-u^n = \mathcal{L}_{\hat{u}^n} f = -d_h(\hat{u}^nD_hf).
  \end{align*}
Set $v = \hat{u}^n D_h f$, we get 
  \[u-u^n = -d_h(v), \quad \|u-u^n\|^2_{\LL_{\hat{u}^n}^{-1}} = \left[\frac{1}{\hat{u}^n} v,v\right].
  \]
  The optimization problem \eqref{eq:min} is equivalent to the following problem
  \begin{align}\label{eq:opconvex}
    u^{n+1} = \argmin_{(u,v) \in V_{h,\delta}} \left\{\mathcal{J}_1(u,v)=\frac{1}{2\Delta t}\left[\frac{1}{\hat{u}^n} v,v\right] + F_h(u) \right\},
  \end{align}
  with the set $V_{h,\delta}$ defined by
  \[V_{h,\delta}:=\left\{(u,v): u \in \mathcal{A}_{h,\delta},~~u-u^n+d_h(v)=0\right\}.\] 
  Notice that $V_{h,\delta}$ is a region given by linear constraints and thus is convex. The objective function $\mathcal{J}_1(u,v)$ is also convex due to the convexity of 
  \[\left[\frac{1}{\hat{u}^n} v,v\right]\]
  in $v$ and the convexity of $F_h(u)$ in $u$, which can be seen from that each term 
  \[\frac{1}{h} \frac{(u_{i+1}-u_i)^2}{2u_i}\]
  in the summation in \eqref{eq:Fhdef} is a convex function of $u_i$ and $u_{i+1}$. Therefore, the problem \eqref{eq:opconvex} is a convex optimization problem and thus has a unique solution. Since \eqref{eq:opconvex} and \eqref{eq:min} are equivalent, there also exists a unique solution to the optimization problem \eqref{eq:min} for any $\delta >0$. 

Step 2.  We show that $u^{n+1}>0$ is the solution to the numerical scheme \eqref{eq:timed}-\eqref{eq:mudiscrete}  if and only if it is the minimizer of \eqref{eq:min} in $\mathring{\mathcal{A}}_{h,\delta}$ with 
$\delta \le \delta_0$ for some $\delta_0>0$.  
First, suppose $u^{n+1}$ is the minimizer in $\mathring{\mathcal{A}}_{h,\delta}$ of the convex optimization problem \eqref{eq:min}, it follows that
  \begin{align}\label{jv}
    \left\langle \frac{\partial \mathcal{J}}{\partial u} (u^{n+1}), v \right\rangle = 0, \quad \forall v \in  \mathring{\C}_{\rm per},
     ~~s.t.~~  \sum_{i=1}^N v_i=0. 
  \end{align}
Hence, we must  have  
  \begin{align*}
    \frac{1}{\Delta t} \LL^{-1}_{\hat{u}^n}(u^{n+1}-u^n) + \frac{\partial F_h}{\partial u}(u^{n+1}) = {\rm const},
  \end{align*}
which yields  
  \begin{align*}
    \frac{u^{n+1}-u^n}{\Delta t} = -\LL_{\hat{u}^n}\left(\frac{\partial F_h}{\partial u}(u^{n+1}) +{\rm const}\right) = d_h\left(\hat{u}^n D_h\left(\frac{\partial F_h}{\partial u}(u^{n+1}) \right)\right),
  \end{align*}
  which is the numerical scheme \eqref{eq:timed}-\eqref{eq:mudiscrete}. On the other hand for numerical solution $u^{n+1}>0$,  we take $\delta \leq  \delta_0 < \min u^{n+1}$,  and reverse the above calculation to get (\ref{jv}), which when combined with the  convexity of $\mathcal{J}(u)$  and the assumption that the minimizer does not touch the boundary of $\mathcal{A}_{h,\delta}$, implies that it is indeed the minimizer. 
  
 Step 3. We proceed to show that there exists  $\delta_0>0$ such that the minimizer does not touch the boundary of $\mathcal{A}_{h,\delta}$  for all $0<\delta \le \delta_0$.
 We use a contradiction argument: 
 suppose there exists a minimizer $u^*$ to the optimization problem \eqref{eq:min} touching the boundary of $\mathcal{A}_{h,\delta}$ at some grid points $i_0,\ldots,i_{m-1}$ with $1\le m \le N-1$, that is
  \[u^*_{i_0} =\ldots=u_{i_{m-1}}^*= \delta.\]
 We calculate the directional derivative along $v=(v_1,\ldots,v_N) \in \mathring{\C}_{\rm per}$ with $u^*+sv \in \mathcal{A}_{h,\delta}$ as
    \begin{align}\label{eq:phical}
    \frac{d}{ds} & \mathcal{J}(u^*+sv) \bigg|_{s=0}\nonumber \\ \nonumber
    &=  \frac{d}{ds} \bigg(\frac{1}{2\Delta t}\|u^*+sv-u^n\|^2_{\LL_{\hat u^n}^{-1}}+\frac{1}{2h} \sum_{i=1}^N \frac{(u^*_{i+1}-u^*_{i}+sv_{i+1}-sv_i)^2}{u^*_i+sv_i}\bigg)\bigg|_{s=0} \nonumber\\
     &=  \bigg(\frac{1}{\Delta t} \langle \LL_{\hat{u}^n}^{-1} (u^*+sv-u^n),v\rangle  + \frac{1}{h}\sum_{i=1}^N \frac{(u^*_{i+1}-u^*_{i}+sv_{i+1}-sv_i)(v_{i+1}-v_i)}{u^*_i+sv_i} \nonumber \\
     &\quad - \frac{1}{2h}\sum_{i=1}^N\frac{(u^*_{i+1}-u^*_{i}+sv_{i+1}-sv_i)^2}{(u^*_i+sv_i)^2}v_i\bigg)\bigg|_{s=0} \nonumber\\
     &=  \frac{1}{\Delta t} \langle \LL_{\hat{u}^n}^{-1}(u^*-u^n),v\rangle + \frac{1}{h}\sum_{i=1}^N \frac{(u^*_{i+1}-u^*_{i})(v_{i+1}-v_i)}{u^*_i} \nonumber\\
     &\quad- \frac{1}{2h}\sum_{i=1}^N\frac{(u^*_{i+1}-u^*_{i})^2v_i}{(u^*_i)^2}. 
  \end{align}
  In order to find a direction along which the above derivative is negative, we distinguish two cases 
characterized by a parameter $q$ to be determined later:
  \begin{enumerate}[label=(\roman*)]
      \item \label{i1} There exists a grid index $j_0\in\{i_0,\ldots,i_{m-1}\}$ such that 
      $u^*_{j_0+1}-u^*_{j_0} > qu^*_{j_0}$. 
    \item \label{i2} For all $j\in\{i_0,\ldots,i_{m-1}\}$, $u^*_{j+1}-u^*_j \le qu^*_j$.
  \end{enumerate}
  First we consider  case \ref{i1}. Suppose $u^*$ reaches its maximum at the grid point $i_m$,
  we take the direction $v$ in the equation \eqref{eq:phical} to be
  \begin{align*}
  v_i = \left\{\begin{array}{cl} 1,& \;\text{ for } i=i_0,i_1,\ldots,i_{m-1}, \\
  -m,& \;\text{ for } i=i_m,\\
  0,&\; \text{ otherwise.}  
  \end{array}
  \right.
  \end{align*}
  It is easy to check that $u^*+sv \in \mathring{\mathcal{A}}_{h,\delta}$ for small $s$.

We illustrate by taking the case $N=3$ as an example. When $m=1$, suppose $K/h=1$ and $i_0=2$, the region of $\mathcal{A}_{h,\delta}$ is plotted in Figure \ref{fig:choice}(a). We use a ternary diagram to represent the set $\{(u_1,u_2, u_3):u_1+u_2+u_3=K/h\}$. The region inside the outer trianlge is $\mathcal{A}_{h,0}$ and the region inside the inner triangle is $\mathcal{A}_{h,\delta}$. The red point represents the minimizer $(u^*_1,u^*_2, u_3^*)$. Here $u^*_1>u^*_3>u^*_2$ so $i_1=1$. The vector $v=(-1,1,0)$ is chosen to be in the direction as shown by the red arrow in the figure.
  When $m=2$, suppose $i_0=1,i_1=2$, the vector $v=(1,1,-2)$ is chosen to be in the direction as shown by the red solid arrow in Figure \ref{fig:choice}(b). The two dashed arrows represent the vectors $(1,0,-1)$ and $(0,1,-1)$, respectively.

\begin{figure}
\begin{center}
\begin{tabular}{ccc}
    \includegraphics[width=0.33\textwidth]{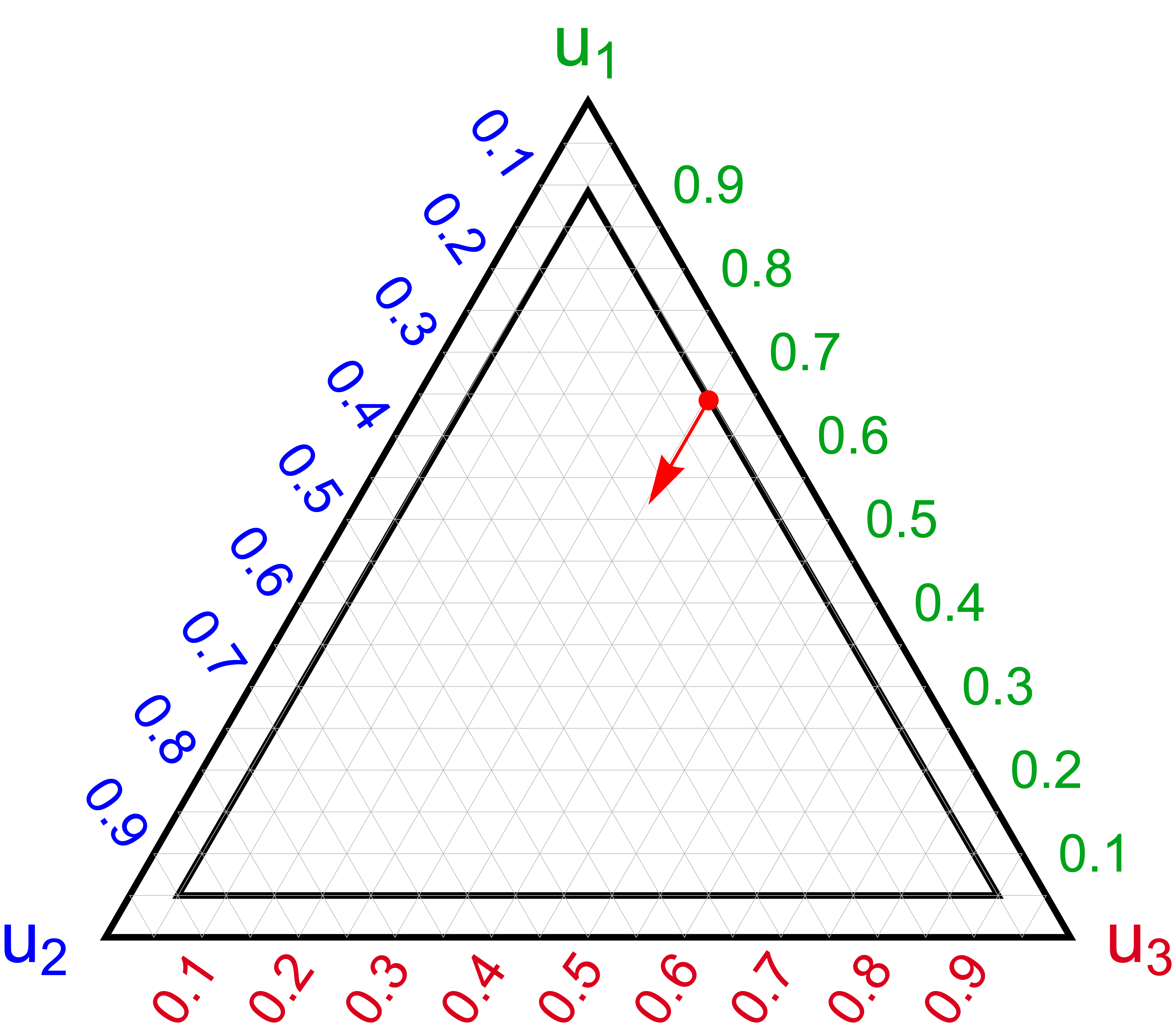} & \includegraphics[width=0.33\textwidth]{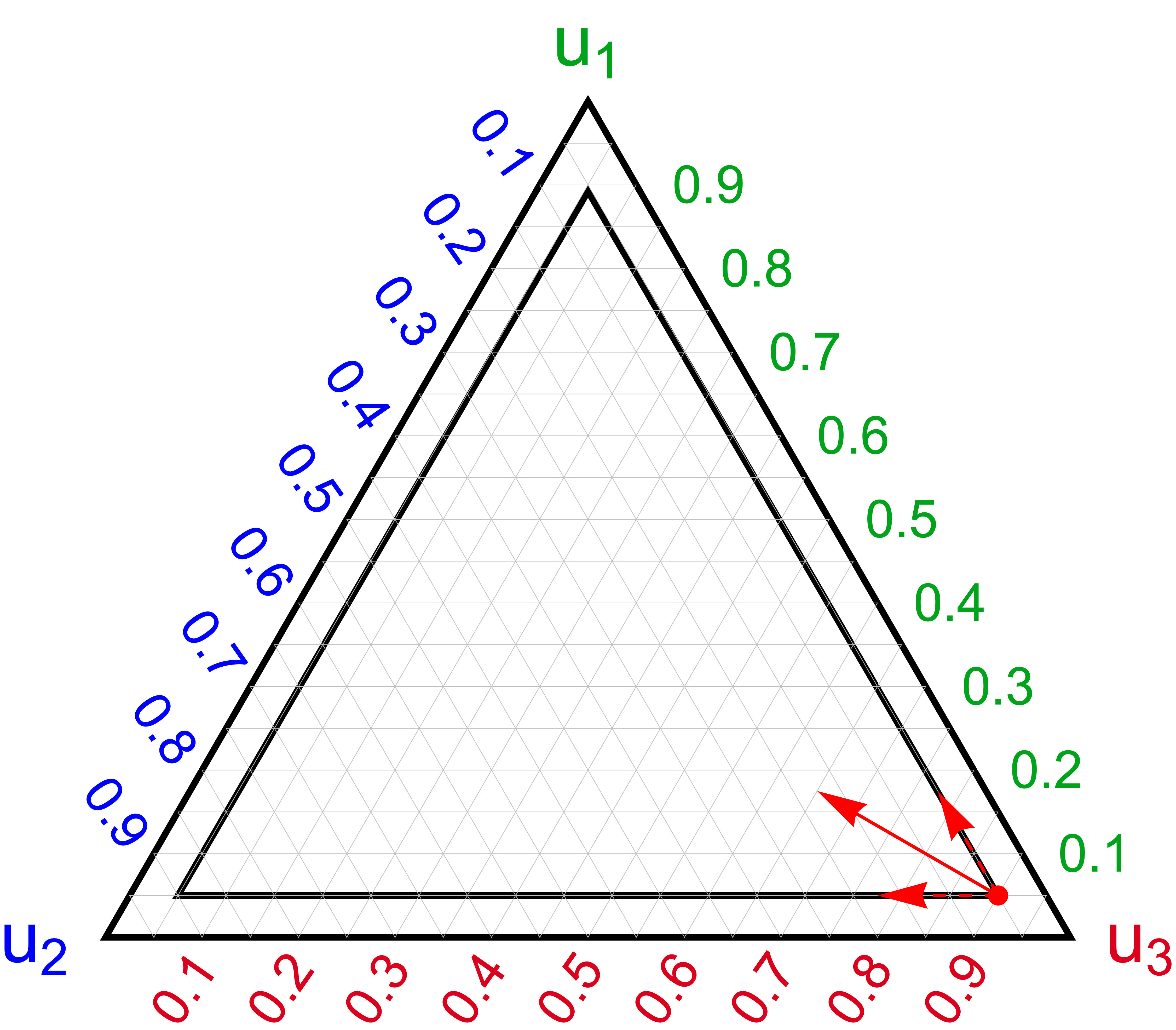} & \includegraphics[width=0.33\textwidth]{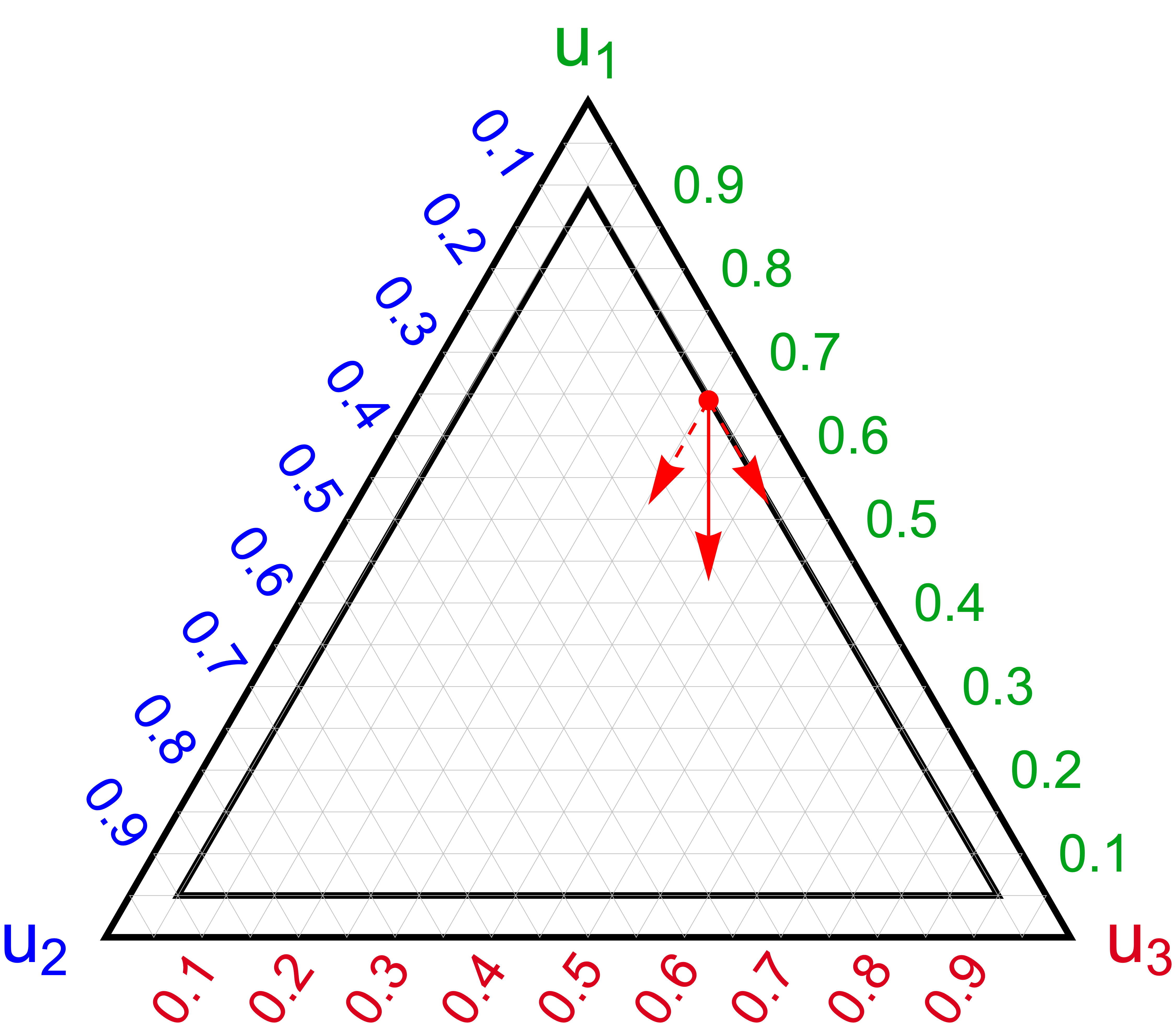} \\
    (a) & (b) & (c)
\end{tabular}
\end{center}
\caption{The direction $v$}\label{fig:choice}
\end{figure}


  The directional derivative along the vector $v$ is
  \begin{align}
     \frac{1}{h}\frac{d}{ds} & \mathcal{J}(u^*+sv) \bigg|_{s=0} \nonumber\\
    =& \frac{1}{\Delta t}\sum_{p=0}^{m-1} (\LL_{\hat{u}^n}^{-1} (u^*-u^n))_{i_p}
    - \frac{m}{\Delta t} (\LL_{\hat{u}^n}^{-1} (u^*-u^n))_{i_m} \nonumber\\
    &- \sum_{p=0}^{m-1} \left(\frac{u^{*}_{i_p+1}-u^*_{i_p}}{h^2 u^*_{i_p}} - \frac{u^{*}_{i_p}-u^*_{i_p-1}}{h^2 u^{*}_{i_p-1}}  + \frac{(u^{*}_{i_p+1}-u^*_{i_p})^2}{2h^2(u^*_{i_p})^2}\right)\nonumber \\
    &+ \frac{m(u^{*}_{i_m+1}-u^*_{i_m})}{h^2 u^*_{i_m}} -\frac{m(u^{*}_{i_m}-u^*_{i_m-1})}{h^2 u^{*}_{i_m-1}}  +\frac{m(u^{*}_{i_m+1}-u^*_{i_m})^2}{2h^2(u^*_{i_m})^2}. \label{eq:jcal1}
  \end{align}
  Since $u^*_j > \delta$ for $j\neq i_0,\ldots,i_{m-1}$, the adjacent grid points $i_p+1$ and $i_p-1$ of $i_p$ are either greater than $\delta$ or equal to $\delta$, so
  \begin{align*}
    u^*_{i_p+1}-u^*_{i_p} \ge 0, \quad u^*_{i_p}-u^*_{i_p-1} { \le} 0.
  \end{align*}
  From the assumption \ref{i1}, there exists an index $j_0 \in \{i_0,\ldots,i_{m-1}\}$ such that
  \begin{align*}
    u^*_{j_0+1}-u^*_{j_0} > qu^*_{j_0}.
  \end{align*}
  We can deduce that
  \begin{align}
    \sum_{p=0}^{m-1} \left(\frac{u^{*}_{i_p+1}-u^*_{i_p}}{h^2 u^*_{i_p}} - \frac{u^{*}_{i_p}-u^*_{i_p-1}}{h^2 u^{*}_{i_p-1}}  + \frac{(u^{*}_{i_p+1}-u^*_{i_p})^2}{2h^2(u^*_{i_p})^2}\right) \ge \frac{u^*_{j_0+1}-u^*_{j_0}}{h^2u^*_{j_0}} > \frac{q}{h^2} .\label{eq:adsf}
  \end{align}
  Since $u^*_{i_m}$ is the maximum point and the summation $\sum_{i=1}^K h u^*_i = K$, we have
  \begin{align*}
     \frac{K}{hN} < u^*_{i_m} < \frac{K}{h},
  \end{align*}
and so 
\begin{align}\label{eq:umaxin}
    \frac{(u^*_{i_m+1}-u^*_{i_m})^2}{(u^*_{i_m})^2} < \frac{2K^2}{h^2(\frac{K}{hN})^2} \le 2 N^2.
  \end{align}
  Taking the above inequality and \eqref{eq:adsf} into \eqref{eq:jcal1} and applying Lemma \ref{lm:phi}, we can deduce that
  \begin{align*}
    \frac{1}{h}\frac{d}{ds} & \mathcal{J}(u^*+sv) \bigg|_{s=0} < \frac{4mCh^{-\frac12}LK}{\Delta t \min \hat{u}^n  h} - \frac{q}{h^2} + \frac{mN^2}{h^2}.
  \end{align*}
  Taking 
  \begin{align*}
    q \ge \frac{4NCh^{\frac12}LK}{\Delta t \min \hat{u}^n}+N^3
  \end{align*}
  leads to 
  \begin{align*}
    \frac{1}{h}\frac{d}{ds} & \mathcal{J}(u^*+sv) \bigg|_{s=0} < -\frac{4(N-m)Ch^{-\frac32}LK}{\Delta t \min \hat{u}^n}-\frac{(N-m)N^2}{h^2}  <0, 
  \end{align*}
 which contradicts to the assumption that $u^*$ is a minimizer. Therefore, case \ref{i1} cannot occur.

  Next we consider the case \ref{i2}. We first claim that there exists at least one index $i_m$ such that
  \begin{align}\label{eq:ubda}
    u^*_{i_m+1} - u^*_{i_m} > qu^*_{i_m}.
  \end{align}
  Otherwise, we would have 
  $u^*_{j+1}-u^*_j \le q u_j^*$ for all $j\in \{1,\ldots,N\}$. Using this we obtain 
  \begin{align*}
    &u^*_{i_0} = \delta,\\
    &u^*_{i_0+1} = u^*_{i_0+1} -u^*_{i_0} + u^*_{i_0} \le q\delta+\delta,\\
    & u^*_{i_0+2} = u^*_{i_0+2} - u^*_{i_0+1} + u^*_{i_0+1} \le (1+q)(q\delta +\delta),\\
    & \qquad\ldots, \\
    &u^*_{i_0+N-1} = u^*_{i_0+N-1} -u^*_{i_0+N-2} + u^*_{i_0+N-2} \le (1+q)^{N-1}\delta.  
  \end{align*}
  Taking the summation of above equations gives
  \begin{align*}
    h\sum_{j=1}^N u^*_{j} \le \frac{(1+q)^N-1}{q}\delta h \le K/2
  \end{align*}
  if we take 
  \begin{align*}
   \delta \le \delta_0 = \frac{qK}{2((1+q)^N-1)h}.
  \end{align*}
  This is contradictory to the relation of $\sum_{j=1}^N u^*_j = K$.
 We can further assume 
  \begin{align}\label{eq:akdj}
      u^*_{i_m}-u^*_{i_m-1} \le qu^*_{i_m-1}.
  \end{align}
  because if this does not hold, we can move to the next grid point until such an inequality holds (notice that \eqref{eq:akdj} holds for all grid points in the set $\{i_0,\ldots,i_{m-1}\}$).

  We suppose $u^*$ reaches it maximum at the grid point $i_{m+1}$. Notice that $i_{m+1} \neq i_0,\ldots,i_{m-1}$ and $i_{m+1} \neq i_{m}$ because ${i_m}$ can not be a maximum point due to \eqref{eq:ubda}. We take the direction $v$ to be
  \begin{align*}
  v_i = \left\{\begin{array}{cl} 1,& \;\text{ for } i=i_0,i_1,\ldots,i_{m-1},i_m,\\
  -m-1,&\;\text{ for }i=i_{m+1},\\
  0,&\; \text{ otherwise.}  
  \end{array}
  \right.
  \end{align*}
  It is easy to check $u^*+sv \in \mathring{A}_{h,\delta}$ for small $s$.

   Taking $N=3$ as an example. When $m=1$, suppose $K/h=1$ and $i_0=2,i_1=3,i_2=1$, the region of $\mathcal{A}_{h,\delta}$ is plotted in Figure \ref{fig:choice}(c). 
   The vector $v=(-2,1,1)$ is chosen to be in the direction as show by the red solid arrow in the figure where the two dotted arrows represent the directions of $(-1,1,0)$ and $(-1,0,1)$. 

  The directional derivative of the objective function $\mathcal{J}(u)$ along the above direction $v$ is 
  \begin{align}
     \frac{1}{h}\frac{d}{ds} & \mathcal{J}(u^*+sv) \bigg|_{s=0} \nonumber\\
    =& \frac{1}{\Delta t}\sum_{p=0}^m (\LL_{\hat{u}^n}^{-1} (u^*-u^n))_{i_p}
    - \frac{m+1}{\Delta t} (\LL_{\hat{u}^n}^{-1} (u^*-u^n))_{i_m} \nonumber\\
    &- \sum_{p=0}^{m-1} \left(\frac{u^{*}_{i_p+1}-u^*_{i_p}}{h^2 u^*_{i_p}} - \frac{u^{*}_{i_p}-u^*_{i_p-1}}{h^2 u^{*}_{i_p-1}}  + \frac{(u^{*}_{i_p+1}-u^*_{i_p})^2}{2h^2(u^*_{i_p})^2}\right)\nonumber \\
    &-\frac{u^{*}_{i_m+1}-u^*_{i_m}}{h^2 u^*_{i_m}} + \frac{u^{*}_{i_m}-u^*_{i_m-1}}{h^2 u^{*}_{i_m-1}}  - \frac{(u^{*}_{i_m+1}-u^*_{i_m})^2}{2h^2(u^*_{i_m})^2} 
    + \frac{(m+1)(u^{*}_{i_{m+1}+1}-u^*_{i_{m+1}})}{h^2 u^*_{i_{m+1}}}\nonumber \\&-\frac{(m+1)(u^{*}_{i_{m+1}}-u^*_{i_{m+1}-1})}{h^2 u^{*}_{i_{m+1}-1}}  +\frac{(m+1)(u^{*}_{i_{m+1}+1}-u^*_{i_{m+1}})^2}{2h^2(u^*_{i_{m+1}})^2}. \label{eq:jcal2}
  \end{align}
  From the inequalities \eqref{eq:ubda} and \eqref{eq:akdj}, we have 
  \begin{align*}
    -\frac{u^{*}_{i_m+1}-u^*_{i_m}}{h^2 u^*_{i_m}} + \frac{u^{*}_{i_m}-u^*_{i_m-1}}{h^2 u^{*}_{i_m-1}}  - \frac{(u^{*}_{i_m+1}-u^*_{i_m})^2}{2h^2(u^*_{i_m})^2} < - \frac{q}{h^2} + \frac{q}{h^2} - \frac{q^2}{2h^2} = -\frac{q^2}{2h^2}.
  \end{align*}
  Since $u^*_j > \delta$ for all $j\in\{1,\ldots,N\}$ and $j\not\in\{i_0,\ldots,i_{m-1}\}$ and $u^*_{i_{m+1}}$ is the maximum point so \eqref{eq:umaxin} holds, we have
  \begin{align*}
    \frac{1}{h}\frac{d}{ds} & \mathcal{J}(u^*+sv) \bigg|_{s=0} \le \frac{4(m+1)Ch^{-\frac12}LK}{\Delta t \min \hat{u}^n h} -\frac{q^2}{2h^2}+\frac{(m+1)N^2}{h^2}.
  \end{align*}
  Taking 
    \[q\ge\left(\frac{8(N+1)Ch^{\frac12}LK}{\Delta t\min\hat{u}^n}\right)^{\frac12} +\sqrt{2(N+1)}N ,\]
    we get
    \begin{align*}
          \frac{1}{h}\frac{d}{ds} & \mathcal{J}(u^*+sv) \bigg|_{s=0} \le -\frac{4(N-m)Ch^{-\frac32}LK}{\Delta t \min \hat{u}^n} - \frac{(N-m)N^2}{h^2}<0,
    \end{align*}
    which contradicts to the assumption that $u^*$ is a minimizer. Therefore, the situation \ref{i2} cannot occur, either.

  Combining the above estimates, we set 
    \[q=\max\left\{\frac{4NCh^{\frac12}LK}{\Delta t \min \hat{u}^n}+N^3, \left(\frac{8(N+1)Ch^{\frac12}LK}{\Delta t\min\hat{u}^n}\right)^{\frac12} +\sqrt{2(N+1)}N\right\}\]
    and    
    $$ \delta_0 =\frac{qK}{2((1+q)^N-1)h}$$
    so that for $\delta \le \delta_0$  neither \ref{i1} nor \ref{i2} can occur.  Hence no minimizer $u^*$ of the optimization problem \eqref{eq:min} can touch the boundary of $\mathcal{A}_{h,\delta}$  for any $\delta \le \delta_0$. 
Thus we finish the proof.
  \end{proof}

The mass conservation, positivity-preserving,  and energy stability of the scheme \eqref{eq:timed}-\eqref{eq:mudiscrete} can be obtained directly from Theorem \ref{thm}. 
We state the results in the following theorem.
\begin{thm}\label{thm:property}
  Given $u^0 \in \mathcal{C}_{\rm per}$ with $u^0>0$. The numerical scheme \eqref{eq:timed}-\eqref{eq:mudiscrete} has a unique solution $u^n$ for $n\ge 1, n\in\mathbb{N}$ satisfying the following properties:
  \begin{enumerate}
    \item (Mass conservation)  For any $n\ge 1, n\in\mathbb{N}$, \[\sum_{i=1}^N h u_i^{n} = \sum_{i=1}^N h u_{i}^{0}.\]
    \item (Positivity-preserving) For any $i=1,\ldots,N$, 
    \[u_i^{n}>0\]
    holds for any $n\ge 1, n\in\mathbb{N}$.
    \item  (Unconditionally energy stability) For any $n\ge 0, n\in\mathbb{N}$,
  \begin{align}\label{eq:Fhmon}
    F_h(u^{n+1}) \le F_h(u^n) - \frac{1}{\Delta t}\|u^{n+1}-u^n\|_{\LL_{\widehat u^n}^{-1}}^2
  \end{align}
  holds for any $\Delta t>0$.
  \end{enumerate}
\end{thm}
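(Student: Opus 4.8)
The plan is to obtain all three assertions as consequences of Theorem \ref{thm}, which already does the essential work, via a straightforward induction on $n$. For the inductive step, given $u^0>0$ I would assume $u^n>0$ has been constructed; then $K:=h\sum_{i=1}^N u_i^n>0$, so Theorem \ref{thm} applies and yields some $\delta_0>0$ such that, for every $0<\delta\le\delta_0$, the problem \eqref{eq:min} has a unique minimizer $u^{n+1}$, this minimizer lies in the interior $\mathring{\mathcal{A}}_{h,\delta}$, and it solves the scheme \eqref{eq:timed}--\eqref{eq:mudiscrete}. In particular $u^{n+1}>\delta>0$, which both closes the induction (hence $u_i^n>0$ for all $n$ and $i$, i.e.\ part (2)) and gives existence of the scheme solution at every time step. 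Uniqueness at each step again follows from Theorem \ref{thm}: any positive solution of \eqref{eq:timed}--\eqref{eq:mudiscrete} with the given $u^n$ must coincide with the unique minimizer of \eqref{eq:min} once $\delta$ is chosen small enough (below both $\delta_0$ and the minimum of that solution).

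For part (1) I would simply observe that $u^{n+1}\in\mathcal{A}_{h,\delta}$ forces $h\sum_{i=1}^N u_i^{n+1}=K=h\sum_{i=1}^N u_i^n$, and then iterate down to $n=0$; equivalently, summing \eqref{eq:timed} over $i$ and using $\sum_{i=1}^N (d_h\psi)_i=0$ for any $\psi\in\E_{\rm per}$ (telescoping plus periodicity) gives the same conclusion.

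For the energy inequality \eqref{eq:Fhmon} I would \emph{not} merely use $\J[u^{n+1}]\le\J[u^n]=F_h(u^n)$ — that would only give the weaker bound with $\tfrac{1}{2\Delta t}$ — but instead exploit the convexity of $F_h$ on the positive orthant established in the proof of Theorem \ref{thm}. Writing $H^{n+1}=\tfrac1h\tfrac{\partial F_h}{\partial u}(u^{n+1})$, so that $\langle H^{n+1},w\rangle=\sum_{i=1}^N\tfrac{\partial F_h}{\partial u_i}(u^{n+1})\,w_i$ for $w\in\C_{\rm per}$, the subgradient inequality gives $F_h(u^{n+1})-F_h(u^n)\le\langle H^{n+1},u^{n+1}-u^n\rangle$. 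Then I would insert the scheme in the form $u^{n+1}-u^n=\Delta t\, d_h(\hat u^n D_h H^{n+1})$ and apply summation by parts, $\langle f,d_h(\phi D_h g)\rangle=-[D_h f,\phi D_h g]$, to get $\langle H^{n+1},u^{n+1}-u^n\rangle=-\Delta t\,[\hat u^n D_h H^{n+1},D_h H^{n+1}]$. Finally I would identify this bracket with the weighted norm: the function $f:=\LL_{\hat u^n}^{-1}(u^{n+1}-u^n)\in\mathring{\C}_{\rm per}$ satisfies $-d_h(\hat u^n D_h f)=u^{n+1}-u^n=-\Delta t\, d_h(\hat u^n D_h H^{n+1})$, hence $D_h f=-\Delta t\,D_h H^{n+1}$ and $\|u^{n+1}-u^n\|_{\LL_{\hat u^n}^{-1}}^2=[\hat u^n D_h f,D_h f]=\Delta t^2[\hat u^n D_h H^{n+1},D_h H^{n+1}]$. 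Chaining the three relations yields $F_h(u^{n+1})-F_h(u^n)\le-\tfrac{1}{\Delta t}\|u^{n+1}-u^n\|_{\LL_{\hat u^n}^{-1}}^2$ for any $\Delta t>0$, which is exactly \eqref{eq:Fhmon}.

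I do not expect a genuine obstacle here, since the delicate part — using the singularity of $F_h$ near the boundary of $\mathcal{A}_{h,\delta}$ to keep the minimizer in $\mathring{\mathcal{A}}_{h,\delta}$ — is entirely contained in Theorem \ref{thm}. The only points that need care are propagating the hypotheses $u^n>0$ and $K>0$ of Theorem \ref{thm} through the induction (together with the accompanying uniqueness bookkeeping), and invoking the convexity of $F_h$ rather than just the minimality of $u^{n+1}$ so as to land the sharp constant in \eqref{eq:Fhmon}.
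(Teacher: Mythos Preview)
Your proposal is correct, and for existence, uniqueness, positivity, and mass conservation it matches the paper's proof: the paper likewise argues by recursively invoking Theorem~\ref{thm} to get $u^{n+1}\in\mathring{\mathcal{A}}_{h,\delta_n}$ (hence $u^{n+1}>0$), and deduces mass conservation from the conservative form of the scheme.

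For the energy inequality \eqref{eq:Fhmon} you take a genuinely different route. The paper simply compares $\J[u^{n+1}]\le\J[u^n]=F_h(u^n)$, which literally yields only
\[
F_h(u^{n+1})\le F_h(u^n)-\tfrac{1}{2\Delta t}\|u^{n+1}-u^n\|_{\LL_{\hat u^n}^{-1}}^2,
\]
i.e.\ the constant $\tfrac{1}{2\Delta t}$ that you correctly flag as weaker than the stated $\tfrac{1}{\Delta t}$. Your argument via the convexity of $F_h$ (established inside the proof of Theorem~\ref{thm}) together with summation by parts and the identification $D_hf=-\Delta t\,D_hH^{n+1}$ recovers the full constant in \eqref{eq:Fhmon}, so it actually closes a small gap in the paper's presentation. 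Conceptually your computation is the discrete analogue of the relative-energy argument in Lemma~\ref{lem:23}; the paper's minimality comparison is shorter but loses a factor of two.
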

\begin{proof}
(1) This property is ensured by the conservative form of the numerical scheme, leading to 
\[\sum_{i=1}^N u_i^{n+1} = \sum_{i=1}^N u^n_i.
\]

(b) Starting from $u^0>0$, we apply Theorem \ref{thm} recursively to obtain 
$$
u^{n} \in  \mathring{\mathcal{A}}_{h,\delta_n}\subset \mathring{\mathcal{A}}_{h,0}
$$ 
for some constant $\delta_n>0$, together we have  for any $n$,  
\[
u^n \in \bigcap_{i=1}^{\infty} \mathring{\mathcal{A}}_{h,\delta_i}\subset\mathring{\mathcal{A}}_{h, 0}.
\]
This ensures that $u^n>0$. 

(3) Since the solution of the numerical scheme \eqref{eq:timed}-\eqref{eq:mudiscrete} is the minimizer of the optimization problem \eqref{eq:min}, we have
  \begin{align*}
    \mathcal{J}[u^{n+1}] \le \mathcal{J}[u^n],
  \end{align*}
  that is
  \begin{align*}
    \frac{1}{\Delta t}\|u^{n+1}-u^n\|_{\LL_{\widehat u^n}^{-1}}^2 + F_h(u^{n+1}) \le F_h(u^n).
  \end{align*}
  Therefore inequality \eqref{eq:Fhmon} holds.
\end{proof}

\begin{rem}
 In the proof of Theorem \ref{thm}, $\delta$ is dependent of $h, \Delta t$ and $\hat{u}^n$.  
Moreover, $q$ is not guaranteed to be finite when $h\to 0$ and $\Delta t \to 0$, the above analysis cannot be carried over to the continuous equation. 
\end{rem}

\begin{rem}  A standard central discretization of equation \eqref{eq:DLSSnd} is given by
  \[H = - \frac{|D_h u|^2}{2\hat{u}^2} - d_h\left(\frac{D_h u}{ \hat{u}}\right),\]
  which corresponds to the derivative of a discretized energy
  \[
  F_h(u):= \frac{1}{2}\left[ \frac{D_h u}{\hat{u}}, D_h u \right] = \frac{h}{2} \sum_{i=1}^N \frac{|(D_h u)_{i+\frac{1}{2}}|^2}{ \hat{u}_{i+\frac{1}{2}}} =\frac{1}{h}\sum_{i=1}^N \frac{(u_{i+1}-u_{i})^2}{(u_{i+1}+u_i)}.
  \]
  With this discrete energy Theorem \ref{thm} and Theorem \ref{thm:property} can be proven in similar manner.   Moreover,  Theorem \ref{thm} and Theorem \ref{thm:property} also hold for 
 schemes with other forms of the discrete energy such as  
  \begin{align*}
   F_h(u) = \frac{1}{h} \sum_{i=1}^N \frac{(u_i-u_{i-1})^2}{2u_i},
 \end{align*} 
 or 
 \begin{align*}
 F_h(u) =\frac{1}{4h} \sum_{i=1}^N \left(\frac{(u_{i+1}-u_i)^2 + (u_i-u_{i-1})^2}{u_i}\right). 
\end{align*}
\end{rem}

\section{The scheme in higher dimensions} 
The numerical scheme \eqref{eq:timed}-\eqref{eq:mudiscrete} based on the gradient flow structure of the equation \eqref{eq:DLSSnd} can be generalized to higher dimensions. Theorem \ref{thm} and Theorem \ref{thm:property} can be shown to hold too. We will illustrate this by only considering  the two dimensional case. 


\subsection{Notations.}
\textbf{}We take the domain to be $\mathbb{T}^2 = [0,L]\times [0,L]$, and the mesh size $h=L/N$ for both $x$ and $y$ directions. We label the grid points with $i,j$ for $i=1,\ldots, N$ and $j=1,\ldots,N$. Since we are taking a periodic grid, we extend the label to the whole $\mathbb{Z} \times \mathbb{Z}$ by using $\mathcal{C} \times \mathcal{C}$ with $\mathcal{C}$ defined in \eqref{eq:cedef} and also label the mid-point grids using $\mathcal{E} \times \mathcal{E}$ with $\mathcal{E}$ defined in \eqref{eq:cedef}. 
We define the periodic function spaces:
\begin{align*}
  \mathcal{C}_{\rm per} := \{f: \mathcal{C} \times \mathcal{C} \to \mathbb{R}\},\quad &\mathcal{E}^X_{\rm per}:=\{f: \mathcal{E} \times \mathcal{C} \to \mathbb{R}\}, \quad \mathcal{E}^Y_{\rm per}:=\{f: \mathcal{C} \times \mathcal{R} \to \mathbb{R}\}, 
\end{align*}
and also 
\begin{align*}
  \mathring{\mathcal{C}}_{\rm per} := \left\{f\in \mathcal{C}_{\rm per}: \sum_{i,j=1}^N f_{i,j}=0\right\}.
\end{align*}
We define the difference operators $D_x,D_y$ and $d_x,d_y$ as
\begin{align*}
  (D_xf)_{i+\frac{1}{2},j} = \frac{f_{i+1,j}-f_{i,j}}{h}, \quad (D_yf)_{i,j+\frac{1}{2}} = \frac{f_{i,j+1}-f_{i,j}}{h}, \\
  (d_xf)_{i,j} = \frac{f_{i+\frac{1}{2},j}-f_{i-\frac{1}{2},j}}{h}, \quad (d_yf)_{i,j} = \frac{f_{i,j+\frac{1}{2}}-f_{i,j-\frac{1}{2}}}{h}, 
\end{align*}
the discrete gradient $D_h$ as
\begin{align}\label{eq:Dh2d}
  (D_h f)_{i,j} = (D_x f_{i+\frac{1}{2},j},D_y f_{i,j+\frac{1}{2}}),
\end{align}
and the discrete divergence $d_h$ as
\begin{align}\label{eq:dh2d}
 d_h \cdot \mathbf{f}_{i,j} = d_x f^x_{i,j} + d_y f^y_{i,j},\quad \mathbf{f}_{i,j} = (f^x,f^y) \in \mathcal{E}_{\rm per}^X \times \mathcal{E}_{\rm per}^Y .
\end{align}
We also define the averages
\begin{align}\label{eq:Axax2d}
  \hat{f}_{i+\frac{1}{2},j} = \frac{f_{i,j}+f_{i+1,j}}{2},
  \quad \hat{f}_{i,j+\frac{1}{2}} = \frac{f_{i,j}+f_{i,j+1}}{2}.
\end{align}
The inner products are defined by
\begin{align*}
  &\langle f,g \rangle:=h^2 \sum_{i,j=1}^N f_{i,j}g_{i,j}, \; \forall f,g \in \mathcal{C}_{\rm per}, \\
  & [f,g]_x:= \frac{1}{2}h^2\sum_{i,j=1}^N \left(f_{i+\frac{1}{2},j} g_{i+\frac{1}{2},j}+f_{i-\frac{1}{2},j} g_{i-\frac{1}{2},j}\right), \\
  &[f,g]_y:=\frac{1}{2}h^2\sum_{i,j=1}^N \left(f_{i,j+\frac{1}{2}} g_{i,j+\frac{1}{2}}+f_{i,j-\frac{1}{2}} g_{i,j-\frac{1}{2}}\right),
\end{align*}
and for vector functions:
\begin{align*}
  [\mathbf{f},\mathbf{g}]:=[f^x,g^x]_x + [f^y,g^y]_y.
\end{align*}
The corresponding norms in these spaces are defined accordingly.
Suppose $f,g \in \mathcal{C}_{\rm per}$, and $\phi$ is defined on all the edge-center points $\mathcal{C}\times \mathcal{E} \, \cup \, \mathcal{E} \times \mathcal{C}$, then the following summation-by-parts formulas hold:
\begin{align*}
  \langle f,d_h \cdot \mathbf{g} \rangle = -[D_h f,\mathbf{g}],\quad \langle f,d_h \cdot (\phi D_h g) \rangle = - [D_h f, \phi D_h g].
\end{align*}
Suppoe $\phi \in \mathcal{C}\times \mathcal{E} \, \cup \, \mathcal{E} \times \mathcal{C}$ and $\phi>0$, we introduce the following operator $\mathcal{L}$ on $\mathring{\C}_{\rm per}$:
\begin{align}
\mathcal{L}_{\phi} f = -d_h \cdot (\phi D_h f) = g, \text{ for }f\in \mathring{\C}_{\rm per}.
\end{align}
it follows that $g \in \mathring{\mathcal{C}}_{\rm per}$. Since $\mathcal{\LL}_{\phi}$ is invertible, we can then define the norm for any $ g\in\mathring{\C}_{\rm per}$,
\begin{align}\label{eq:phinorm2d}
  \|g\|_{\mathcal{L}_\phi^{-1}}^2:=[\phi D_h f,D_h f],
\end{align}
It can also be induced by the bilinear form
\begin{align*}
  \langle g_1,g_2 \rangle_{\mathcal{L}_\phi^{-1}}:=\langle g_1,\mathcal{L}_\phi^{-1} g_2\rangle=\langle \mathcal{L}_\phi^{-1}g_1, g_2\rangle = [\phi D_h f_1,D_h f_2].
\end{align*}

\subsection{The scheme}
The scheme in abstract form is 
\begin{align}\label{eq:timed2}
  \frac{u^{n+1}-u^n}{\Delta t} = d_h \cdot (\hat{u}^n D_h H^{n+1} ), 
\end{align}
where $H^{n+1}$ is given by
\begin{align}\label{eq:mu2}
  H^{n+1} =& \frac{1}{h^2}\frac{\partial F_h(u)}{\partial u}(u^{n+1}),
\end{align}
with 
\begin{align}\label{eq:fh2d}
  F_h(u) = \frac{1}{2} \sum_{i,j=1}^N \left(\frac{(u_{i+1,j}-u_{i,j})^2}{u_{i,j}} + \frac{(u_{i,j+1}-u_{i,j})^2}{u_{i,j}} \right).
\end{align}
Its explicit form is
\begin{align*}
  \frac{u_{i,j}^{n+1} - u_{i,j}^n}{\Delta t} = \frac{1}{2h^2} \left( (u^n_{i+1,j}+u^n_{i,j})(H_{i+1,j}^{n+1}-H_{i,j}^{n+1})-(u^n_{i,j}+u^n_{i-1,j})(H_{i,j}^{n+1}-H_{i-1,j}^{n+1})\right) \\
  +\frac{1}{2h^2} \left( (u^n_{i,j+1}+u^n_{i,j})(H_{i,j+1}^{n+1}-H_{i,j}^{n+1})-(u^n_{i,j}+u^n_{i,j-1})(H_{i,j}^{n+1}-H_{i,j-1}^{n+1})\right),
\end{align*}
with
\begin{align}
  H_{i,j}^{n+1} 
   =& -\frac{1}{2h^2}\left(\frac{(u_{i+1,j}^{n+1}-u_{i,j}^{n+1})^2}{(u_{i,j}^{n+1})^2} + \frac{(u_{i,j+1}^{n+1}-u_{i,j}^{n+1})^2}{(u_{i,j}^{n+1})^2} \right) \nonumber \\
  &- \frac{1}{h^2} \left(\frac{u_{i+1,j}^{n+1}-u_{i,j}^{n+1}}{u_{i,j}^{n+1}} - \frac{u_{i,j}^{n+1}-u_{i-1,j}^{n+1}}{u_{i-1,j}^{n+1}}+ \frac{u_{i,j+1}^{n+1}-u_{i,j}^{n+1}}{u_{i,j}^{n+1}}-\frac{u_{i,j}^{n+1}-u_{i,j-1}^{n+1}}{u_{i,j-1}^{n+1}}\right).
\end{align}
Again, the numerical scheme \eqref{eq:timed2}-\eqref{eq:mu2} is mass conservative, positivity-preserving and unconditionally energy stable. We state the main result in the following.
\begin{thm}\label{thm:property+}
  Given $u^0 \in \mathcal{C}_{\rm per}$ with $u^0>0$. The numerical scheme  \eqref{eq:timed2}-\eqref{eq:mu2} has a unique solution $u^n$ for $n\ge 1, n\in\mathbb{N}$ satisfying the following properties:
  \begin{enumerate}
    \item (Mass conservation)  For any $n\ge 1, n\in\mathbb{N}$, \[\sum_{i, j=1}^N h^2 u_{i, j}^{n} 
    = \sum_{i, j=1}^N h^2 u_{i, j}^{0}.\]
    \item (Positivity-preserving) For any $i, j=1,\ldots,N$, 
    \[u_{i, j}^{n}>0\]
    holds for any $n\ge 1, n\in\mathbb{N}$.
    \item  (Unconditionally energy stability) For any $n\ge 0, n\in\mathbb{N}$,
  \begin{align}\label{eq:Fhmon+}
    F_h(u^{n+1}) \le F_h(u^n) - \frac{1}{\Delta t}\|u^{n+1}-u^n\|_{\LL_{\widehat u^n}^{-1}}^2
  \end{align}
  holds for any $\Delta t>0$.
  \end{enumerate}
\end{thm}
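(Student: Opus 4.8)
The plan is to reduce everything to a two-dimensional analogue of Theorem~\ref{thm}: for positive $u^n\in\mathcal C_{\rm per}$ with mass $K=\sum_{i,j=1}^N h^2 u^n_{i,j}>0$ there is $\delta_0>0$ such that $u^{n+1}>0$ solves \eqref{eq:timed2}-\eqref{eq:mu2} if and only if it is the unique minimizer of
\[
\J[u]=\frac{1}{2\Delta t}\|u-u^n\|^2_{\LL^{-1}_{\hat u^n}}+F_h(u)
\]
over $\mathcal A_{h,\delta}=\{u\in\mathcal C_{\rm per}:u\ge\delta,\ h^2\sum_{i,j}u_{i,j}=K\}$ for every $0<\delta\le\delta_0$, with $F_h$ as in \eqref{eq:fh2d} and the norm as in \eqref{eq:phinorm2d}. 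Granting this, the three properties follow exactly as in the proof of Theorem~\ref{thm:property}: mass conservation is immediate from the conservative form of \eqref{eq:timed2} and the two-dimensional summation-by-parts identities, giving $\sum_{i,j}u^{n+1}_{i,j}=\sum_{i,j}u^n_{i,j}$; positivity follows by applying the two-dimensional Theorem~\ref{thm} recursively starting from $u^0>0$, so that $u^n\in\mathring{\mathcal A}_{h,\delta_n}\subset\mathring{\mathcal A}_{h,0}$ for some $\delta_n>0$; and the energy inequality \eqref{eq:Fhmon+} follows by using $u=u^n$ as an admissible competitor in the minimization, which gives $\J[u^{n+1}]\le\J[u^n]=F_h(u^n)$.

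For the two-dimensional version of Theorem~\ref{thm} I would repeat the three steps of its one-dimensional proof. Step 1 (existence and uniqueness for all $\delta>0$): substituting $v=\hat u^n D_h f$ rewrites $\|u-u^n\|^2_{\LL^{-1}_{\hat u^n}}$ as $[\tfrac1{\hat u^n}v,v]$ under the linear constraint $u-u^n+d_h\cdot v=0$, so the feasible set is convex, and $F_h$ in \eqref{eq:fh2d} is a sum of terms $(u_{i+1,j}-u_{i,j})^2/u_{i,j}$ and $(u_{i,j+1}-u_{i,j})^2/u_{i,j}$, each convex in its two arguments, so $\J$ is convex with a unique minimizer. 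Step 2 (equivalence with the scheme): if the minimizer lies in the interior $\mathring{\mathcal A}_{h,\delta}$, then $\langle\frac{\partial\J}{\partial u}(u^{n+1}),v\rangle=0$ for all mean-zero $v$, which yields $\tfrac1{\Delta t}\LL^{-1}_{\hat u^n}(u^{n+1}-u^n)+\tfrac1{h^2}\frac{\partial F_h}{\partial u}(u^{n+1})=\mathrm{const}$, i.e. exactly \eqref{eq:timed2}-\eqref{eq:mu2}; conversely, a positive solution of the scheme satisfies these optimality conditions, and convexity forces it to be the minimizer once $\delta\le\delta_0<\min u^{n+1}$.

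Step 3 is the crux and is where the two-dimensional geometry enters. Arguing by contradiction, suppose a minimizer $u^*$ touches the boundary, $u^*_{i_0}=\cdots=u^*_{i_{m-1}}=\delta$ with $1\le m\le N^2-1$. The directional derivative of $\J$ along a mean-zero $v$ supported near these nodes now carries two families of difference quotients coming from the $x$- and $y$-sums of $F_h$, together with the nonlocal term $\tfrac1{\Delta t}\langle\LL^{-1}_{\hat u^n}(u^*-u^n),v\rangle$, which is controlled by Lemma~\ref{lm:phi} (valid verbatim in two dimensions, via the discrete Poincar\'e and inverse inequalities on $\mathcal C\times\mathcal C$). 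As in one dimension one distinguishes the case where some boundary-touching node has a neighbour with $u^*_{\rm nbr}-u^*_{\rm pt}>q\,u^*_{\rm pt}$ from the complementary case; in the complementary case one first shows that \emph{some} node must have a neighbour exceeding this ratio, since otherwise $u^*_{\rm nbr}\le(1+q)u^*_{\rm pt}$ along every lattice edge, and chaining along a shortest path from a boundary-touching node---whose length is at most the diameter of the periodic grid, which is at most $N$---would give $u^*_j\le(1+q)^N\delta$ everywhere, hence $K=h^2\sum_{j}u^*_j\le h^2N^2(1+q)^N\delta$, contradicting the mass constraint once $\delta\le\delta_0:=K/\bigl(2h^2N^2(1+q)^N\bigr)$. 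One then picks the descent direction (value $1$ on the boundary-touching nodes and on the large-jump node, value $-(m{+}1)$ on the maximum node, and $0$ elsewhere), uses $\tfrac{K}{h^2N^2}<u^*_{\max}<\tfrac{K}{h^2}$ to bound the quadratic terms at the maximum by a constant multiple of $N^4$, and chooses $q$ large enough (depending on $h,\Delta t,\min\hat u^n,L,K,N$) to make the directional derivative strictly negative, contradicting minimality. The only genuinely new ingredient relative to the one-dimensional argument is the replacement of the cyclic chain by a shortest-path chain on the two-dimensional periodic lattice, together with the attendant powers of $N$; I expect the main obstacle to be exactly this bookkeeping---setting up the descent directions and the two sets of difference terms so that a single choice of $q$ and $\delta_0$ makes the derivative negative in both cases---rather than any new analytic difficulty.
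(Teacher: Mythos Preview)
Your proposal is correct and follows essentially the same route as the paper's own proof of Theorem~\ref{thm:property+}. One small correction in Step~3: the one-sided inequalities $u^*_{i+1,j}\le(1+q)u^*_{i,j}$ and $u^*_{i,j+1}\le(1+q)u^*_{i,j}$ only permit chaining in the east and north directions, so the maximal directed path length on the periodic $N\times N$ grid is $2(N-1)$ rather than $N$, and your $\delta_0$ should carry $(1+q)^{2(N-1)}$ instead of $(1+q)^{N}$ (the paper obtains the analogous bound by chaining first in $x$ and then in $y$ from a boundary-touching node).
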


\subsection{Proof of Theorem \ref{thm:property+}}
We prove Theorem \ref{thm:property+} for the two dimensional case in the same way as for the one dimensional case, i.e.,  by first proving the result of  Theorem \ref{thm} for 2D case.

Since the scheme \eqref{eq:timed2}-\eqref{eq:mu2} follows the same gradient flow structure,
 we can prove that $u^{n+1}>0$ is a solution to the numerical scheme \eqref{eq:timed2}-\eqref{eq:mu2} if and only if it is an interior minimizer of the optimization problem
\begin{align}\label{eq:min2d}
u^{n+1}=\argmin_{u\in \mathcal{A}_h} \left\{ \J[u] = \frac{1}{2\Delta t}\|u-u_n\|^2_{\LL_{u^n}^{-1}} + F_h(u) \right\}
\end{align}
over the set
\begin{align}\label{eq:Aset2d}
    \mathcal{A}_{h,\delta} =\left\{ u \in \C_{\rm per} \;: \; u \ge\delta, \quad h^2\sum_{i,j=1}^N u_{i,j} = h^2\sum_{i,j=1}^N u_{i,j}^0=:K\right\}
\end{align}
with $\delta \leq \delta_0$ for some $\delta_0>0$. Here we only need to use $\|\cdot\|_{\LL^{-1}_{\hat{u}^n}}$ and $F_h$ defined in \eqref{eq:phinorm2d} and \eqref{eq:fh2d} for the two dimensional case to replace the corresponding one dimensional definitions in  Step 1 and 2 of the previous proof of Theorem \ref{thm}.  

What is left  is to prove that there exists a constant $\delta_0>0$ such that the minimizer of \eqref{eq:min2d} does not touch the boundary.
 We follow the previous proof for the one dimensional case and calculate the directional derivative at the minimizer $u^*$ which touches the boundary at $m$ grid points ($1\le m \le N^2-1$)
$$u^*_{i_0,j_0} =\ldots=u^*_{i_{m-1},j_{m-1}} = \delta,$$
 along a direction $v=(v_{i,j})_{N\times N} \in \mathring{\C}_{\rm per}$ with $u^*+sv \in \mathcal{A}_{h,\delta}$ by
    \begin{align}
    \frac{d}{ds} & \mathcal{J}(u^*+sv) \bigg|_{s=0} \nonumber\\
    =&  \frac{d}{ds} \bigg(\frac{1}{2\Delta t}\|u^*+sv-u^n\|^2_{\LL_{\widehat u^n}^{-1}} \nonumber\\
     &+\frac{1}{2} \sum_{i=1}^N \frac{(u^*_{i+1,j}-u^*_{i,j}+sv_{i+1,j}-sv_{i,j})^2+(u^*_{i,j+1}-u^*_{i,j}+sv_{i,j+1}-sv_{i,j})^2}{u^*_{i,j}+sv_{i,j}}\bigg)\bigg|_{s=0} \nonumber\\
     =&  \bigg(\frac{1}{\Delta t} \langle \LL_{\widehat{u}^n}^{-1} (u^*-u^n),v\rangle  + \sum_{i=1}^N \frac{(u^*_{i+1,j}-u^*_{i,j})(v_{i+1,j}-v_{i,j})}{u^*_{i,j}} \nonumber \\
     &  + \sum_{i=1}^N \frac{(u^*_{i,j+1}-u^*_{i,j})(v_{i,j+1}-v_{i,j})}{u^*_{i,j}}  - \sum_{i=1}^N\frac{(u^*_{i+1,j}-u^*_{i,j})^2+(u^*_{i,j+1}-u^*_{i,j})^2}{2(u^*_{i,j})^2}v_{i,j}\bigg). \label{eq:jd2d}
  \end{align}
Likewise we distinguish the cases based on a positive constant $q$ to be determined:
  \begin{enumerate}[label=(\roman*)]
      \item \label{i12} There exists an index $ r\in\{0,\ldots,m-1\}$ such that $u^*$  satisfies $u^*_{i_r+1,j_r}-u^*_{i_r,j_r} > qu^*_{i_r,j_r}$ or $u^*_{i_r,j_r+1}-u^*_{i_r,j_r} > qu^*_{i_r,j_r}$ for some $ r\in\{0,\ldots,m-1\}$.
    \item \label{i22} For all 
    $(i,j)=(i_0,j_0),\ldots,(i_{m-1},j_{m-1})$
    , the inequalities $u^*_{i+1,j}-u^*_{i,j} \le qu^*_{i,j}$ and $u^*_{i,j+1}-u^*_{i,j} \le qu^*_{i,j}$ hold.
  \end{enumerate}
First we consider the case \ref{i12}. Suppose $u^*$ reaches its maximum at $i=i_m,j=j_m$ with value $u^*_{i_{m},j_{m}}$. We take
  \[v_{i,j} = \left\{\begin{array}{cl} 1,&\text{ for } (i,j)=(i_0,j_0),\ldots,(i_{m-1},j_{m-1}), \\
  -m,&\text{ for } (i,j) = (i_m,j_m),\\
  0,&\text{ otherwise,}  \end{array}\right.\]
  and the equation \eqref{eq:jd2d} becomes
  \begin{align}\label{eq:dsphi2d}
   \frac{1}{h^2}\frac{d}{ds} & \mathcal{J}(u^*+sv) \bigg|_{s=0} \nonumber\\
    =& \sum_{p=0}^{m-1}\frac{1}{\Delta t} (\LL_{\hat{u}^n}^{-1} (u^*-u^n))_{i_p,j_p} - \frac{m}{\Delta t} (\LL_{\hat{u}^n}^{-1} (u^*-u^n))_{i_m,j_m} \nonumber\\ 
    &- \sum_{p=0}^{m-1}\bigg(\frac{1}{h^2} \frac{u^{*}_{i_p+1,j_p}-u^*_{i_p,j_p}}{u^*_{i_p, j_p}}
    - \frac{1}{h^2} \frac{u^{*}_{i_p,j_p}-u^*_{i_p-1,j_p}}{u^{*}_{i_p-1,j_p}}  + \frac{1}{h^2}\frac{u^{*}_{i_p,j_p+1}-u^*_{i_p,j_p}}{u^*_{i_p, j_p}} \nonumber \\
    &- \frac{1}{h^2}\frac{u^{*}_{i_p,j_p} -u^*_{i_p,j_p-1}}{u^{*}_{i_p,j_p-1}} +\frac{1}{h^2} \frac{(u^{*}_{i_p+1,j_p}-u^*_{i_p,j_p})^2 + (u^{*}_{i_p,j_p+1}-u^*_{i_p,j_p})^2}{2(u^*_{i_p,j_p})^2} \bigg) \nonumber \\
    &+ \frac{m}{h^2} \frac{u^{*}_{i_m+1,j_m}-u^*_{i_m,j_m}}{u^*_{i_m, j_m}} -\frac{m}{h^2} \frac{u^{*}_{i_m,j_m}-u^*_{i_m-1,j_m}}{u^{*}_{i_m-1,j_m}}  + \frac{m}{h^2}\frac{u^{*}_{i_m,j_m+1}-u^*_{i_m,j_m}}{u^*_{i_m, j_m}} \nonumber \\
    &- \frac{m}{h^2}\frac{u^{*}_{i_m,j_m} -u^*_{i_m,j_m-1}}{u^{*}_{i_m,j_m-1}} +\frac{m}{h^2} \frac{(u^{*}_{i_m+1,j_m}-u^*_{i_m,j_m})^2 + (u^{*}_{i_m,j_m+1}-u^*_{i_m,j_m})^2}{2(u^*_{i_m,j_m})^2}. 
  \end{align}  
 Since $\{u^*_{i_p,j_p}\}_{p=0}^m$ are the minimum points and $u^*_{i_m,j_m}$ is the maximum point, we have
 \begin{align}
   &u^*_{i_p,j_p} \le u^*_{i_p+1,j_p},u^*_{i_p,j_p+1},u^*_{i_p-1,j_p},u^*_{i_p,j_p-1},\quad p=0,\ldots,m-1, \label{eq:adf1}\\
   &u^*_{i_m,j_m} \ge u^*_{i_m+1,j_m},u^*_{i_m,j_m+1},u^*_{i_m-1,j_m},u^*_{i_m,j_m-1}.  \label{eq:adf2} 
 \end{align}
 Since 
 \begin{align*}
   h^2\sum_{i,j=1}^N u^*_{i,j}=K,
 \end{align*}
 it holds that
 \begin{align*}
   \frac{K}{h^2N^2} \le u^*_{i_m,j_m} \le \frac{K}{h^2}.
 \end{align*}
 Therefore,
 \begin{align} \label{eq:adf3}
   \frac{(u^{*}_{i_m+1,j_m}-u^*_{i_m,j_m})^2 + (u^{*}_{i_m,j_m+1}-u^*_{i_m,j_m})^2}{2(u^*_{i_m,j_m})^2}\le \frac{4K^2}{2h^4\frac{K^2}{h^4N^4}} =2N^4.
 \end{align}
 Taking the inequalities \eqref{eq:adf1}-\eqref{eq:adf3} into \eqref{eq:dsphi2d} and using Lemma \ref{lm:phi}, which also holds for the two dimensions \cite{dong2018positivity},
  we get
  \begin{align}\label{eq:ds1}
      \frac{1}{h^2}  \frac{d}{ds}& \mathcal{J}(u^*+sv) \bigg|_{s=0} \nonumber\\
    \le& \sum_{p=0}^{m-1} \frac{1}{\Delta t} (\LL_{\widehat{u}^n}^{-1} (u^*-u^n))_{i_p,j_p} - \frac{m}{\Delta t} (\LL_{\widehat{u}^n}^{-1} (u^*-u^n))_{i_m,j_m} \nonumber\\
    &- \frac{1}{h^2}\frac{u^{*}_{i_r+1,j_r}-u^*_{i_r,j_r}}{u^*_{i_r, j_r}} - \frac{1}{h^2}\frac{u^{*}_{i_r,j_r+1}-u^*_{i_r,j_r}}{u^*_{i_r, j_r}} \nonumber\\
    &+\frac{m}{h^2} \frac{(u^{*}_{i_m+1,j_m}-u^*_{i_m,j_m})^2 + (u^{*}_{i_m,j_m+1}-u^*_{i_m,j_m})^2}{2(u^*_{i_m,j_m})^2}  \nonumber\\
    \le& \frac{4mCh^{-\frac{1}{2}} L K }{\Delta t\min{\widehat{u}^n}h^2} - \frac{q}{h^2} + \frac{2mN^4}{h^2}.
  \end{align}
  We take
  \[q\ge\frac{4N^2Ch^{-\frac12}LK}{\Delta t \min \hat{u}^n}+2N^6,\]
  so that 
  $$\frac{d}{ds}\mathcal{J}(u^*+sv)\bigg|_{s=0}\le- \frac{4(N^2-m)Ch^{-\frac52}LK}{\Delta t \min \hat{u}^n} - \frac{2(N^2-m)N^4}{h^2}<0.
  $$
This says that $u^*$ cannot be a minimizer, which is contradictory to our assumption.

Next we consider the case \ref{i22}. Similar to \eqref{eq:ubda}, we can find at least one index $i_{m},j_m$ such that 
  \begin{align}\label{eq:bdf1}
    u^*_{i_m+1,j_m} - u^*_{i_m,j_m} > q u^*_{i_m,j_m} \text{ or } u^*_{i_m,j_m+1} - u^*_{i_m,j_m} > qu^*_{i_m,j_m}
  \end{align}
  holds. Otherwise, the inequalities
  \begin{align*}
   u^*_{i+1,j}-u^*_{i,j} \le qu^*_{i,j},~~u^*_{i,j+1}-u^*_{i,j} \le qu^*_{i,j}
  \end{align*}
  hold for all $i,j\in \{1,\ldots,N\}$, then we have
  $u^*_{i_0+p,j_0} \le (1+q)^pu^*_{i_0,j_0}=(1+q)^p\delta$ and $u^*_{i_0,j_0+m} \le (1+q)^mu^*_{i_0,j_0}=(1+q)^m\delta$, and
  \[u^*_{i_0+p,j_0+m} \le (1+q)^pu^*_{i_0,j_0+m}\le (1+q)^{pm} u^*_{i_0,j_0}=(1+q)^{pm} \delta.\]
  Taking the summation over $p=0,\ldots,N-1$ and $m=0,\ldots,N-1$ leads to 
  \begin{align*}
   h^2 \sum_{i,j=1}^N u^*_{i,j} = \sum_{p,m=0}^{N-1} (1+q)^{pm} \delta h^2.
  \end{align*}
  Taking 
  \begin{align*}
    \delta \le \delta_0= \frac{K}{2h^2\sum_{p,m=0}^{N-1}(1+q)^{pm}}
  \end{align*}
   leads to a contradiction as 
   \begin{align*}
     K=h^2\sum_{i,j=1}^N u^*_{i,j} \le \frac{K}{2}.
   \end{align*}
   Therefore, there exists at least one index $i_m,j_m$ satisfying \eqref{eq:bdf1}. 
  We can also assume that
  \begin{align}\label{eq:bdf2}
    u^*_{i_m,j_m}-u^*_{i_m-1,j_m} \le qu^*_{i_m-1,j_m} \text{ and } u^*_{i_m,j_m}-u^*_{i_m,j_m-1} \le qu^*_{i_m,j_m}.
  \end{align}
  Suppose $u^*$ reaches its maximum at $i_{m+1},j_{m+1}$,
  we take the direction $v$ to be 
    \[v_{i,j} = \left\{\begin{array}{cl} 1, & \;\text{ for } (i,j)=(i_0,j_0),\ldots,(i_m,j_m), \\
  -(m+1),&\;\text{ for } (i,j) =(i_{m+1},j_{m+1}), \\
  0,&\; \text{ otherwise.}  
  \end{array}
  \right.
  \]
  The directional derivative \eqref{eq:jd2d} then becomes
  \begin{align}
   \frac{1}{h^2}\frac{d}{ds} & \mathcal{J}(u^*+sv) \bigg|_{s=0} \nonumber\\
    =& \sum_{p=0}^{m}\frac{1}{\Delta t} (\LL_{\hat{u}^n}^{-1} (u^*-u^n))_{i_p,j_p} - \frac{m+1}{\Delta t} (\LL_{\hat{u}^n}^{-1} (u^*-u^n))_{i_m,j_m} \nonumber\\ 
    &- \sum_{p=0}^{m}\bigg(\frac{1}{h^2} \frac{u^{*}_{i_p+1,j_p}-u^*_{i_p,j_p}}{u^*_{i_p, j_p}}
    - \frac{1}{h^2} \frac{u^{*}_{i_p,j_p}-u^*_{i_p-1,j_p}}{u^{*}_{i_p-1,j_p}}  + \frac{1}{h^2}\frac{u^{*}_{i_p,j_p+1}-u^*_{i_p,j_p}}{u^*_{i_p, j_p}} \nonumber \\
    &- \frac{1}{h^2}\frac{u^{*}_{i_p,j_p} -u^*_{i_p,j_p-1}}{u^{*}_{i_p,j_p-1}} +\frac{1}{h^2} \frac{(u^{*}_{i_p+1,j_p}-u^*_{i_p,j_p})^2 + (u^{*}_{i_p,j_p+1}-u^*_{i_p,j_p})^2}{2(u^*_{i_p,j_p})^2} \bigg) \nonumber \\
    &+ \frac{(m+1)}{h^2} \frac{u^{*}_{i_{m+1}+1,j_{m+1}}-u^*_{i_{m+1},j_{m+1}}}{u^*_{i_{m+1}, j_{m+1}}} -\frac{(m+1)}{h^2} \frac{u^{*}_{i_{m+1},j_{m+1}}-u^*_{i_{m+1}-1,j_{m+1}}}{u^{*}_{i_{m+1}-1,j_{m+1}}} \nonumber\\
    & + \frac{(m+1)}{h^2}\frac{u^{*}_{i_{m+1},j_{m+1}+1}-u^*_{i_{m+1},j_{m+1}}}{u^*_{i_{m+1}, j_{m+1}}} 
    - \frac{(m+1)}{h^2}\frac{u^{*}_{i_{m+1},j_{m+1}} -u^*_{i_{m+1},j_{m+1}-1}}{u^{*}_{i_{m+1},j_{m+1}-1}} \nonumber \\
    &+\frac{(m+1)}{h^2} \frac{(u^{*}_{i_{m+1}+1,j_{m+1}}-u^*_{i_{m+1},j_{m+1}})^2 + (u^{*}_{i_{m+1},j_{m+1}+1}-u^*_{i_{m+1},j_{m+1}})^2}{2(u^*_{i_{m+1},j_{m+1}})^2}. 
  \end{align} 
We use the fact that $u^*_{i_p}, p =i_0,\ldots i_{m-1}$ are minimum points, and the inequalities \eqref{eq:bdf1}-\eqref{eq:bdf2} as well as Lemma \ref{lm:phi} to deduce from the above equation that
\begin{align*}
    \frac{1}{h^2}\frac{d}{ds} & \mathcal{J}(u^*+sv) \bigg|_{s=0} \\
    \le& \frac{4(m+1)Ch^{-\frac12}LK}{\Delta t \min \hat{u}^nh^2} - \frac{q^2}{2h^2}+ \frac{2(m+1)N^4}{h^2}.
\end{align*}
  We take
  \begin{align*}
    q \ge \left(\frac{4(N^2+1)Ch^{-\frac12}LK}{\Delta t\min \hat{u}^n}\right)^{\frac{1}{2}} + 2\sqrt{N+1}N^2,
  \end{align*}
  and get
  \begin{align*}
     \frac{1}{h^2}\frac{d}{ds} & \mathcal{J}(u^*+sv) \bigg|_{s=0} \le -\frac{4(N^2-m)Ch^{-\frac52}LK}{\Delta t\min \hat{u}^n}-\frac{2(N^2-m)N^2}{h^2}<0.
  \end{align*}
  This contradicts to the assumption that $u^*$ is a minimizer. 
  Therefore, none of the assumptions \ref{i12} and \ref{i22} hold for 
  \[q=\max\left\{\frac{4N^2Ch^{-\frac12}LK}{\Delta t \min \hat{u}^n}+2N^6,\left(\frac{4(N^2+1)Ch^{-\frac12}LK}{\Delta t\min \hat{u}^n}\right)^{\frac{1}{2}} + 2\sqrt{N+1}N^2\right\}\]
  and 
 \begin{align*}
    \delta \le \delta_0 = \frac{K}{2h^2\sum_{p,m=0}^{N-1}(1+q)^{pm}}.
  \end{align*}
  Therefore, $u^*$ cannot touch the boundary of $\mathcal{A}_{h,\delta}$ for $\delta \ge \delta_0$ and Theorem \ref{thm} holds for the two dimensional case. Theorem \ref{thm:property+} then follows the same way as in the one dimensional case.

\section{Numerical examples}
 In this section, we present both one and two dimensional numerical examples to verify the mass conservation, positivity-preserving, energy dissipation properties,  and the convergence order for 
scheme \eqref{eq:fulld}.


\subsection{One dimension example}
First we consider a numerical example from \cite{BLS94}. We take $\mathbb{T}=[0,1]$ and the initial condition
\begin{align}\label{eq:uini}
  u_0(x) = \left(\varepsilon^{\frac{1}{2}} + \left[\frac{1+\cos 2\pi x}{2}\right]^m\right)^2.
\end{align}
Here $\varepsilon$ is taken to be $0.001$ and $m=1$ and $8$. Plots are made for 
$t=0,8\times 10^{-6},3.2\times 10^{-5},1\times 10^{-4}$ and $7.2 \times 10^{-4}$ 
with solid, dashed, dotted, dash-dot and dash-dash lines, respectively.
The solution $u$ is plotted in Figure \ref{fig:u1d}.
\begin{figure}
\begin{center}
\begin{tabular}{cc}
    \includegraphics[width=0.5\textwidth]{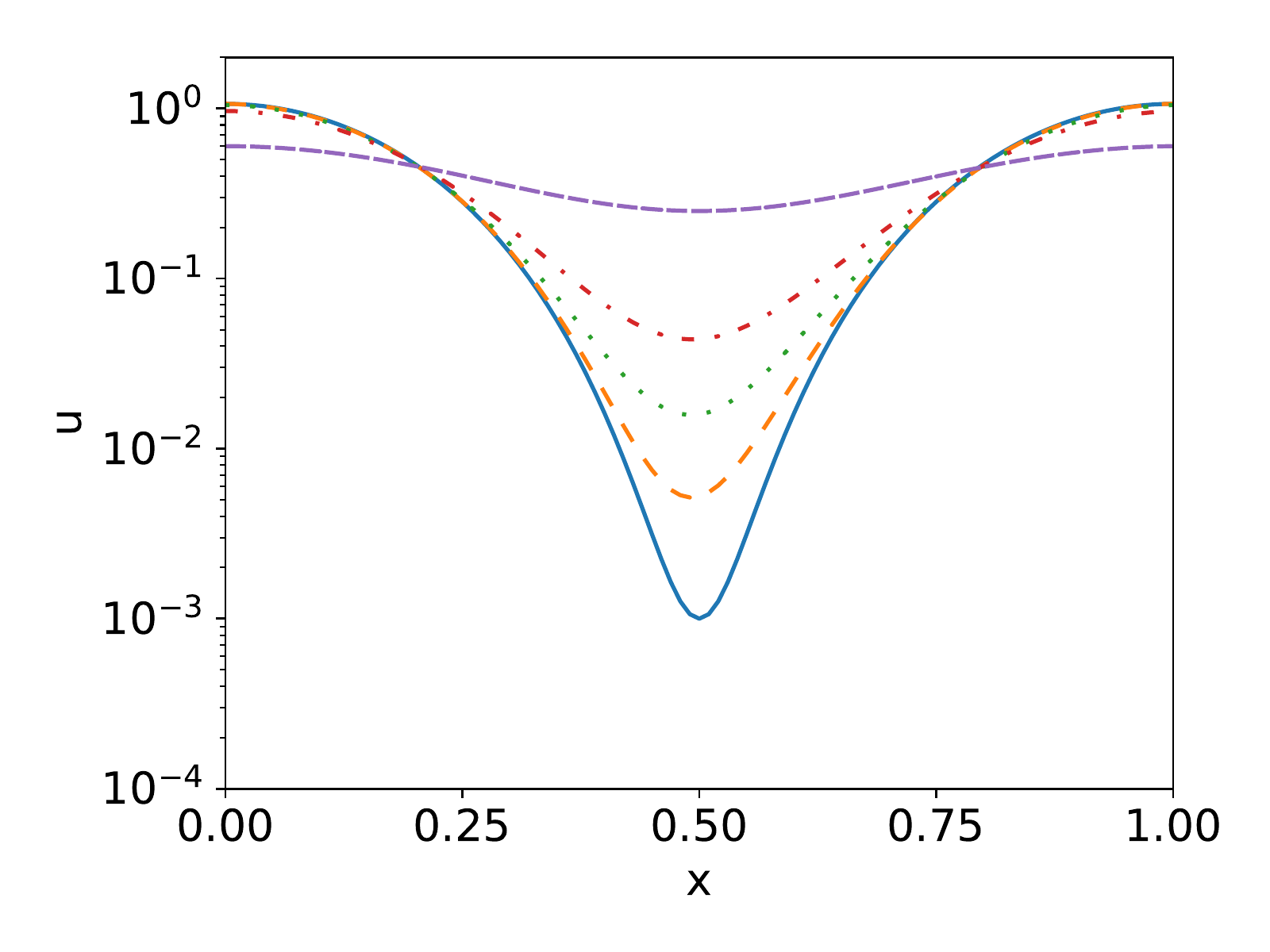}
&    \includegraphics[width=0.5\textwidth]{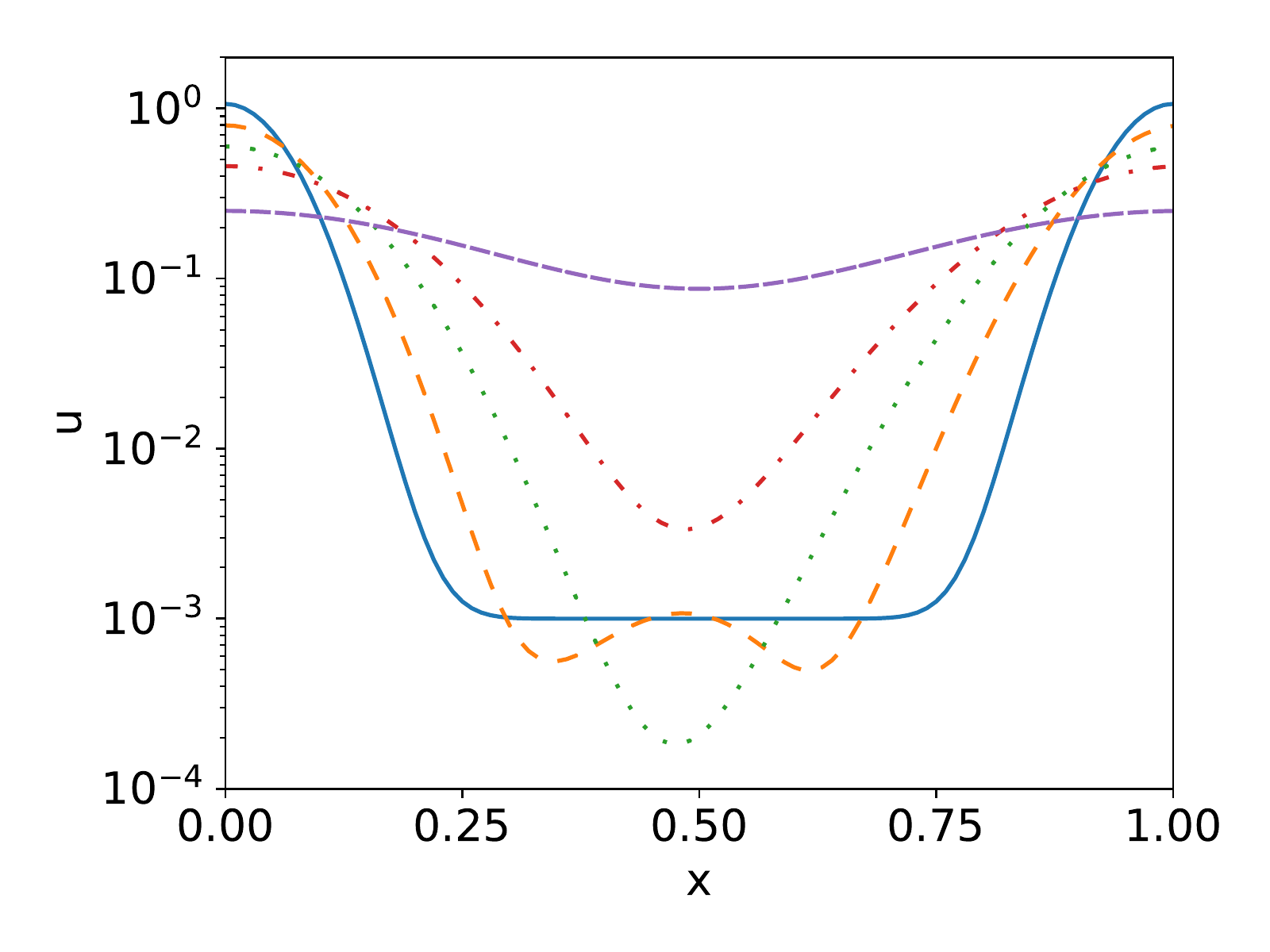}\\
\small $m=1$ & $m=8$
\end{tabular}
\end{center}
\caption{Numerical solutions to the equation \eqref{eq:DLSSnd} in $1$D}\label{fig:u1d}
\end{figure}
Mass variations, energy and minimum values over time are plotted in Figure \ref{fig:mem1d}.
These three figures verify the mass conservation, energy stability, and positivity of solutions, respectively.
\begin{figure}
\begin{center}
\begin{tabular}{ccc}
    \includegraphics[width=0.33\textwidth]{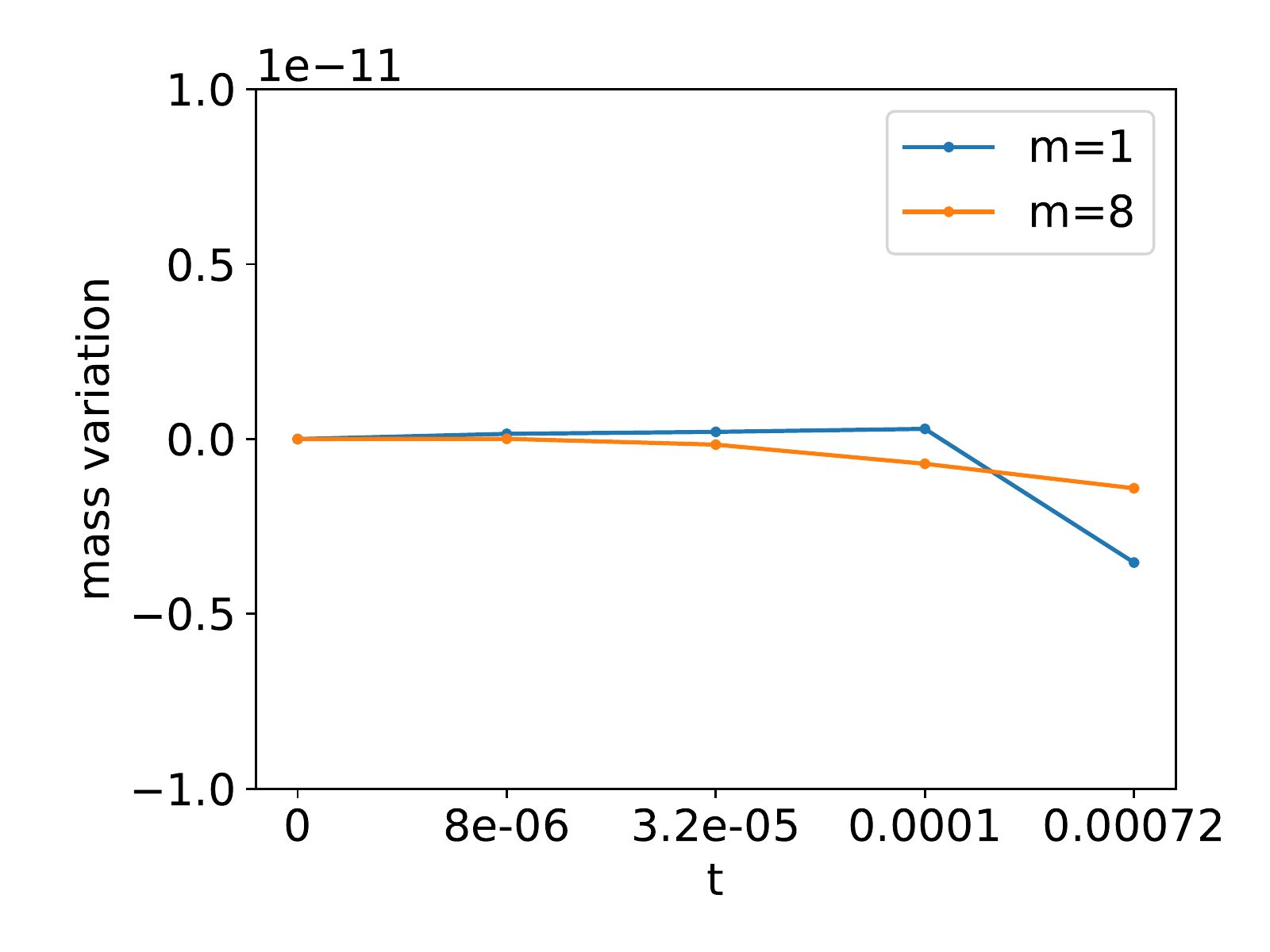}
&    \includegraphics[width=0.33\textwidth]{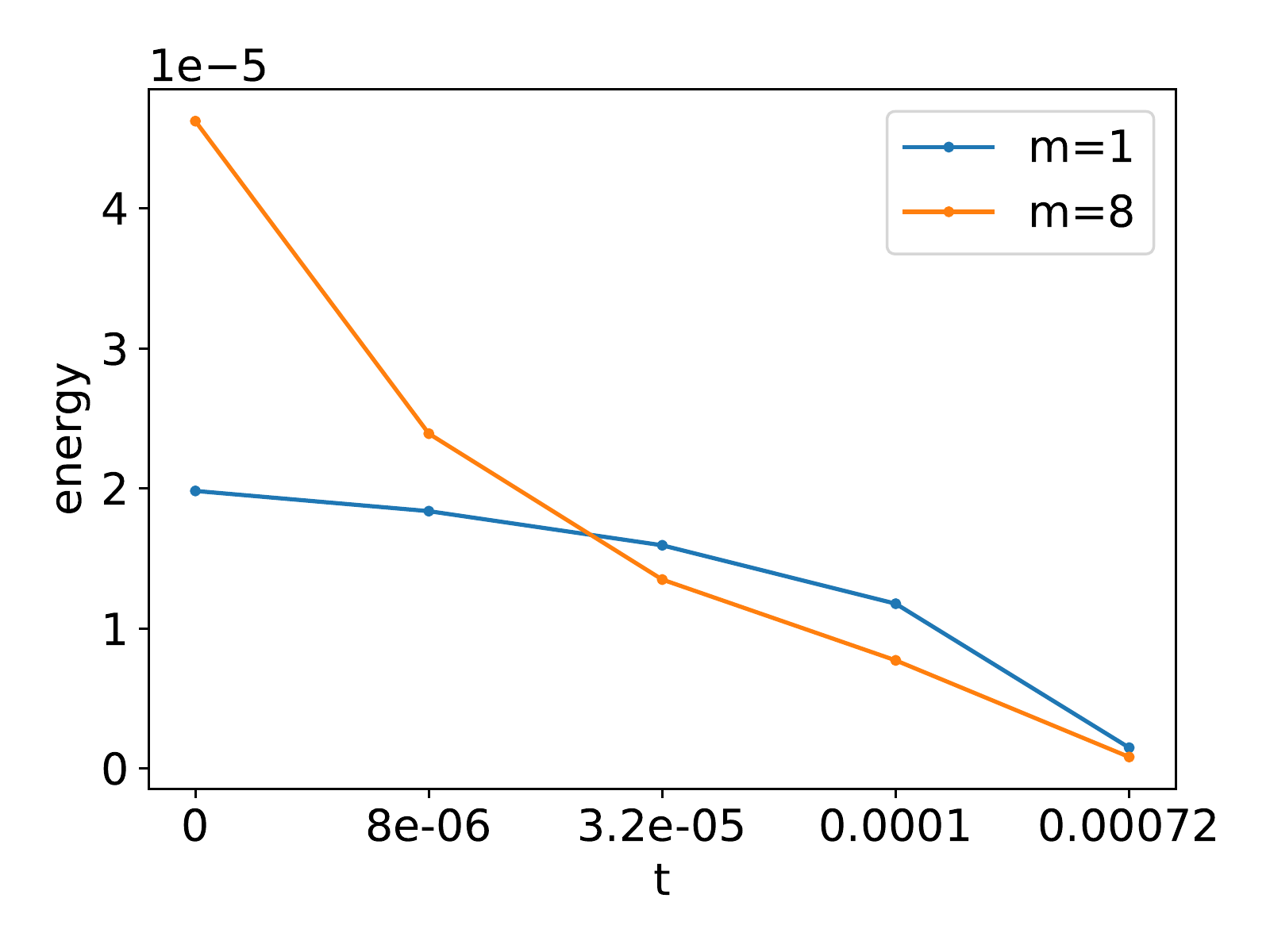} &  \includegraphics[width=0.33\textwidth]{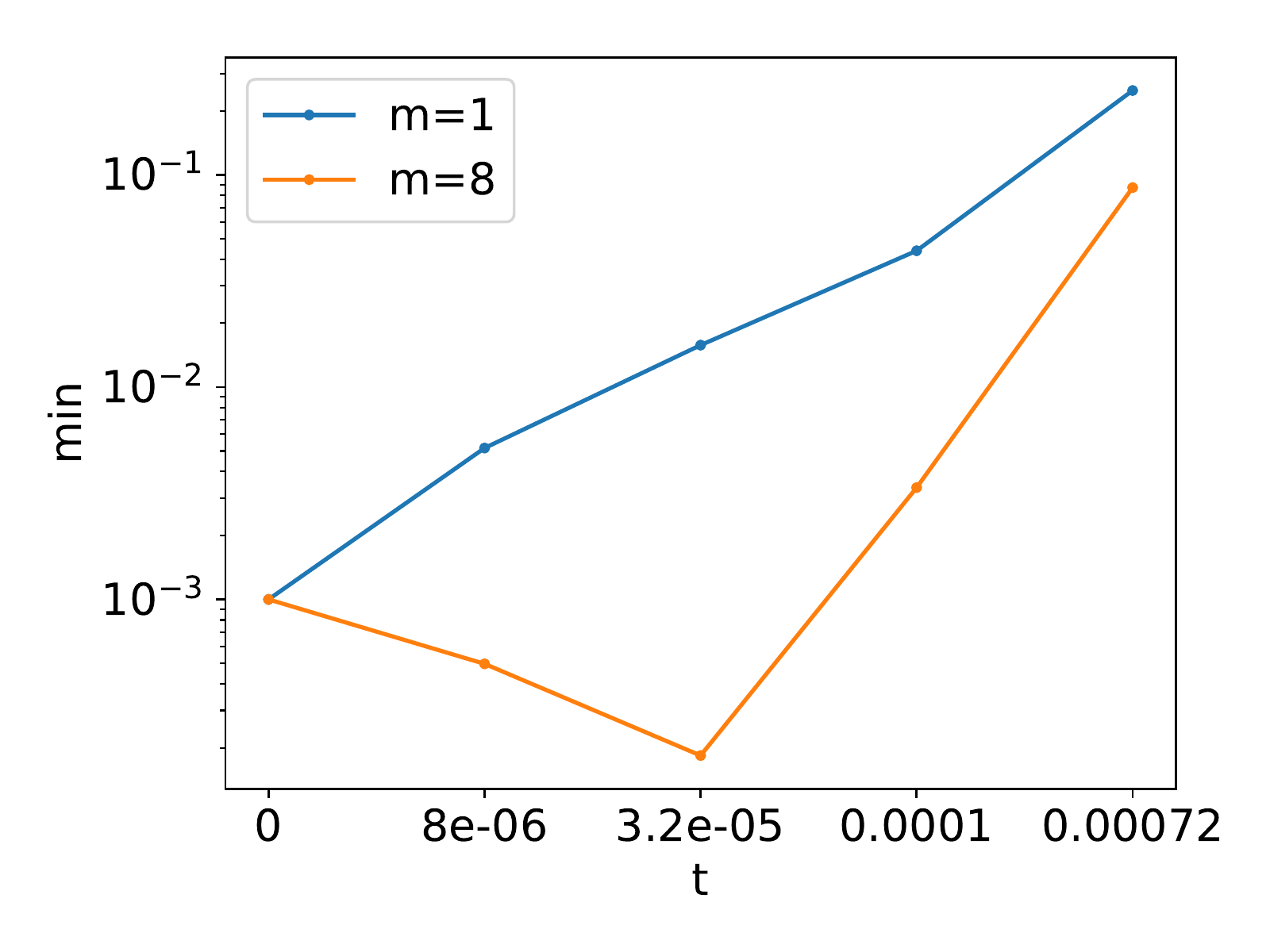}
\end{tabular}
\end{center}
\caption{Mass variations, energy and minimum values of $u$}\label{fig:mem1d}
\end{figure}
It can be seen that
 the minimum value remains positive. 

We also preform error computations for different mesh sizes. We take $\Delta t = 1.6\times 10^{-8} h$ and take the number of mesh intervals to be $10,20,40,80,160$, with mesh size $h=0.1,0.05,0.025,0.0125,0.00625$, respectively. We compare the solutions at time $t=7.2 \times 10^{-4}$. Since we do not have an exact solution, we take the result $\widetilde{u}$ ($h=0.00625$) as a reference solution. We compute the errors based on the fact that for a numerical scheme of $p$-th order, 
\begin{align*}
   \frac{u_{h} - \widetilde{u}}{u_{{h}/{2}}-\widetilde{u}} = 2^p + O(h).
\end{align*}
We interpolate $u_{h}$ using the nearest neighbor method in space to find its difference with $\widetilde{u}$ and calculate the $L^2$ error.
 The result is plotted in Figure \ref{fig:err1d}. The solid line is the error with respect to the reference solution and the dashed line is the line $1/h$. Thus the numerical scheme is approximately of order $1$ in $h$. 

\begin{figure}
\begin{center}
    \includegraphics[width=0.5\textwidth]{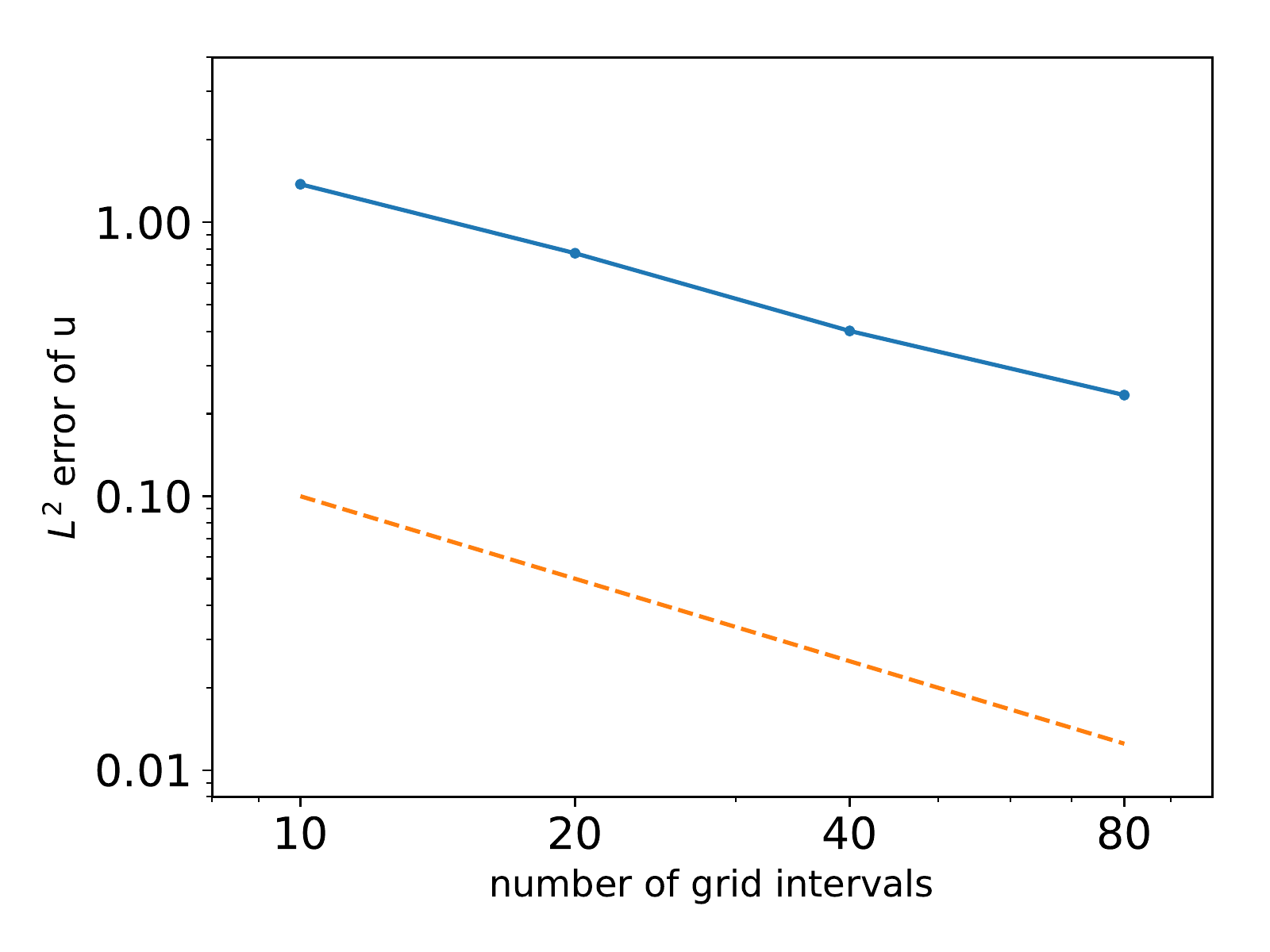}
\end{center}
\caption{Error and convergence order} \label{fig:err1d}
\end{figure}

\subsection{Two dimension example}
We compute the 2D problem on the torus $\mathbb{T}^2 = [0,1] \times [0,1]$ with the following initial condition
\begin{align*}
      u_0(x,y) = \left(0.001^{\frac{1}{2}} + \left[\frac{1+\cos 2\pi x \cdot \cos 2\pi y}{2}\right]^8\right)^2.
\end{align*}
This initial condition is chosen resembling the corresponding initial condition \eqref{eq:uini} in $1$D with $m=8$. However, there is one peak inside the domain $\mathbb{T}^2$ at point $x=y=\frac{1}{2}$. So the diffusion of this inner point and the four peaks at the boundary dominate the behavior of the solutions.

\begin{figure}
\begin{centering}
\begin{tabular}{ccc}
    \includegraphics[width=0.33\textwidth]{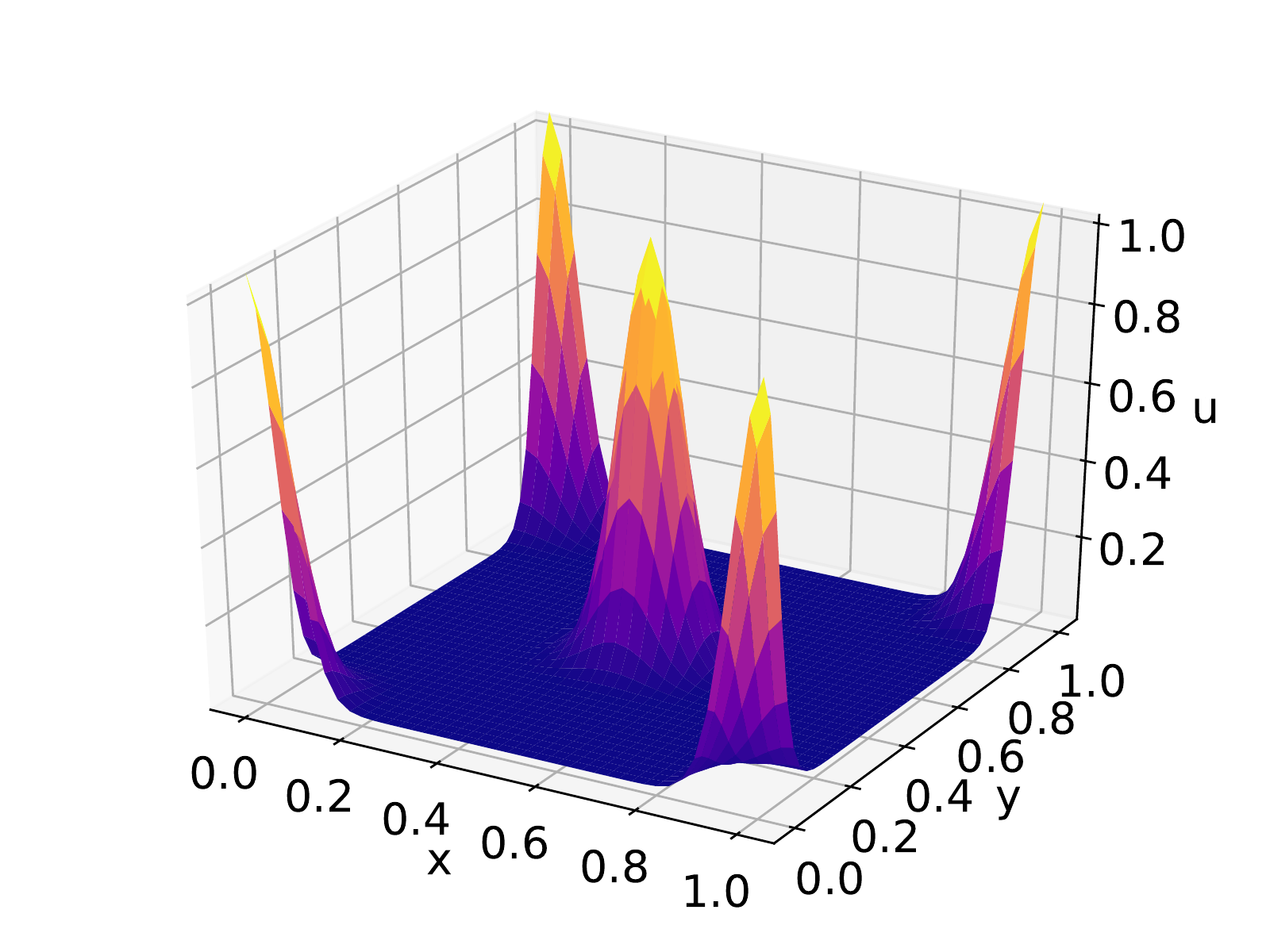}
&    \includegraphics[width=0.33\textwidth]{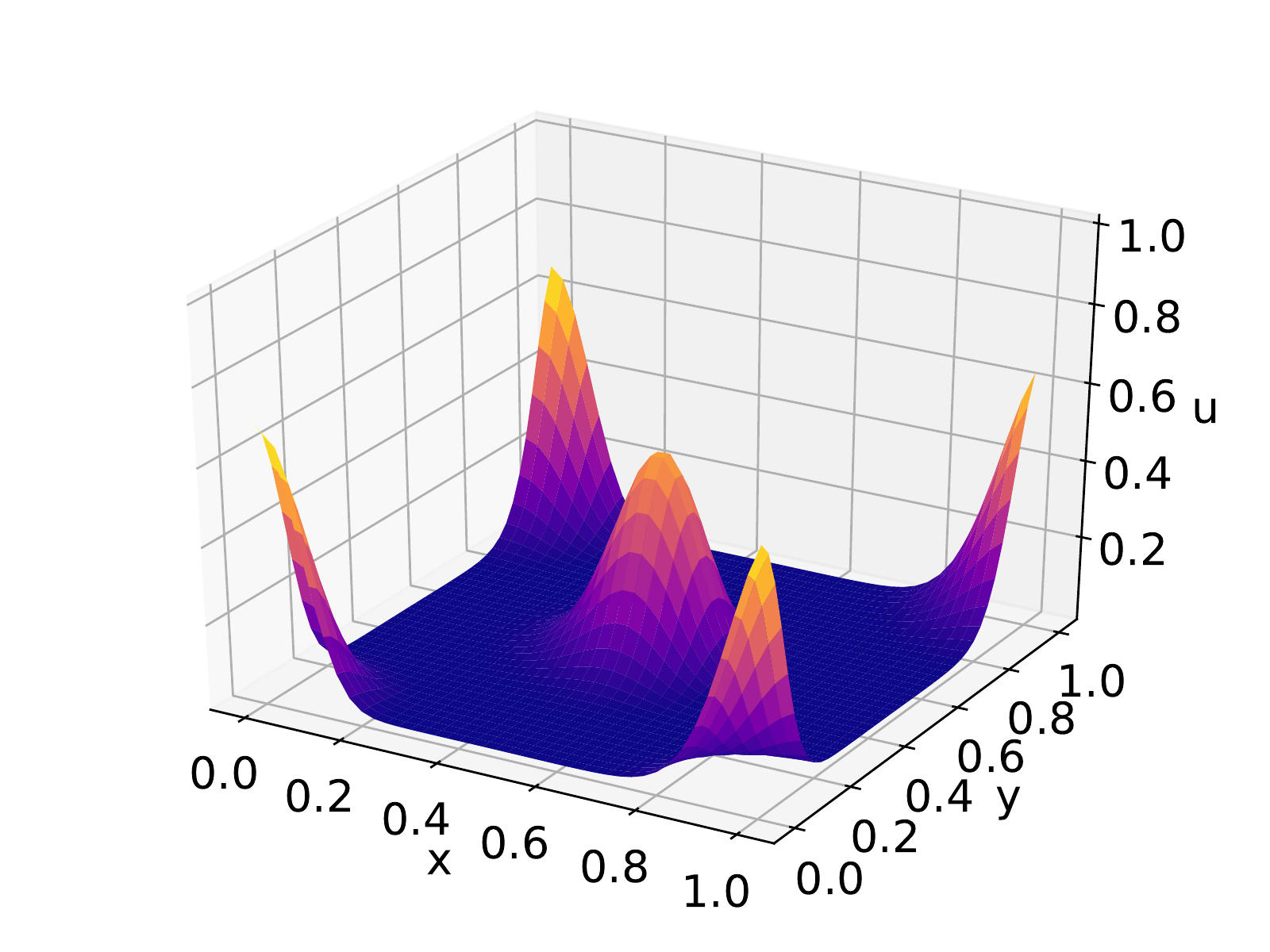} &\includegraphics[width=0.33\textwidth]{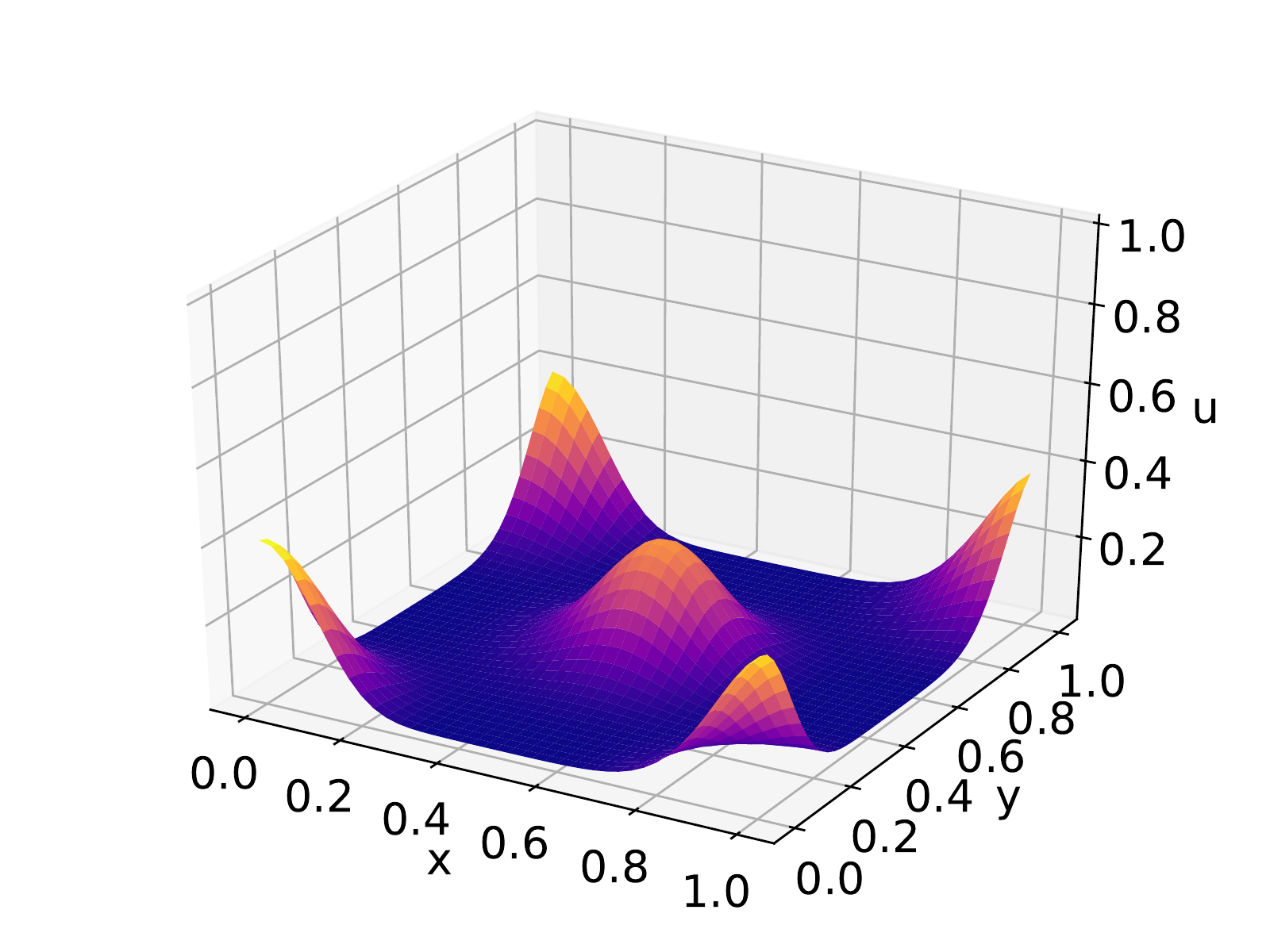}\\
\small $t=0.0$ & $t=8\times 10^{-6}$ & $t = 3.2 \times 10^{-5}$
\end{tabular}
\end{centering}
\begin{centering}
\begin{tabular}{ccc}
\includegraphics[width=0.33\textwidth]{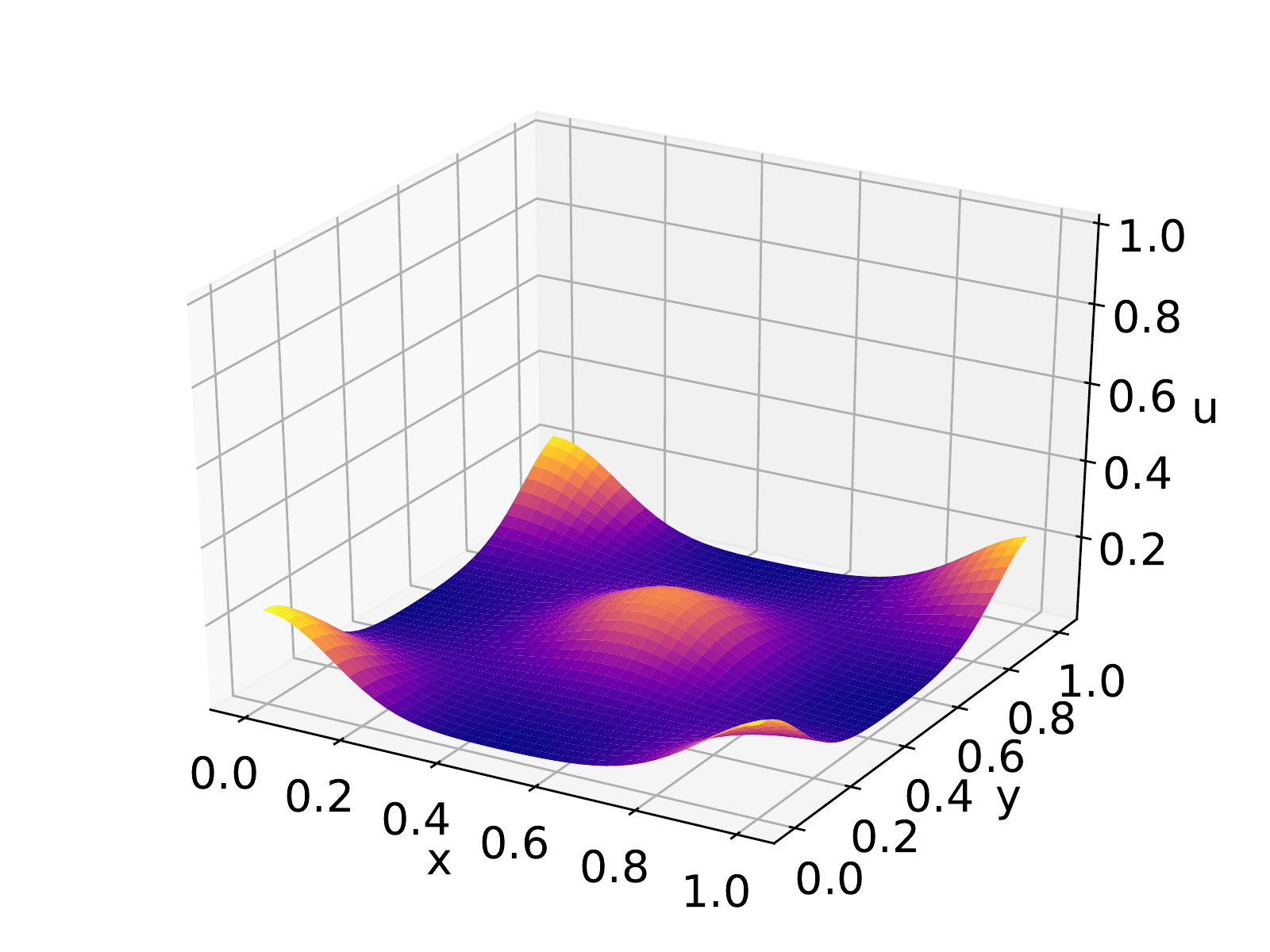}  
& \includegraphics[width=0.33\textwidth]{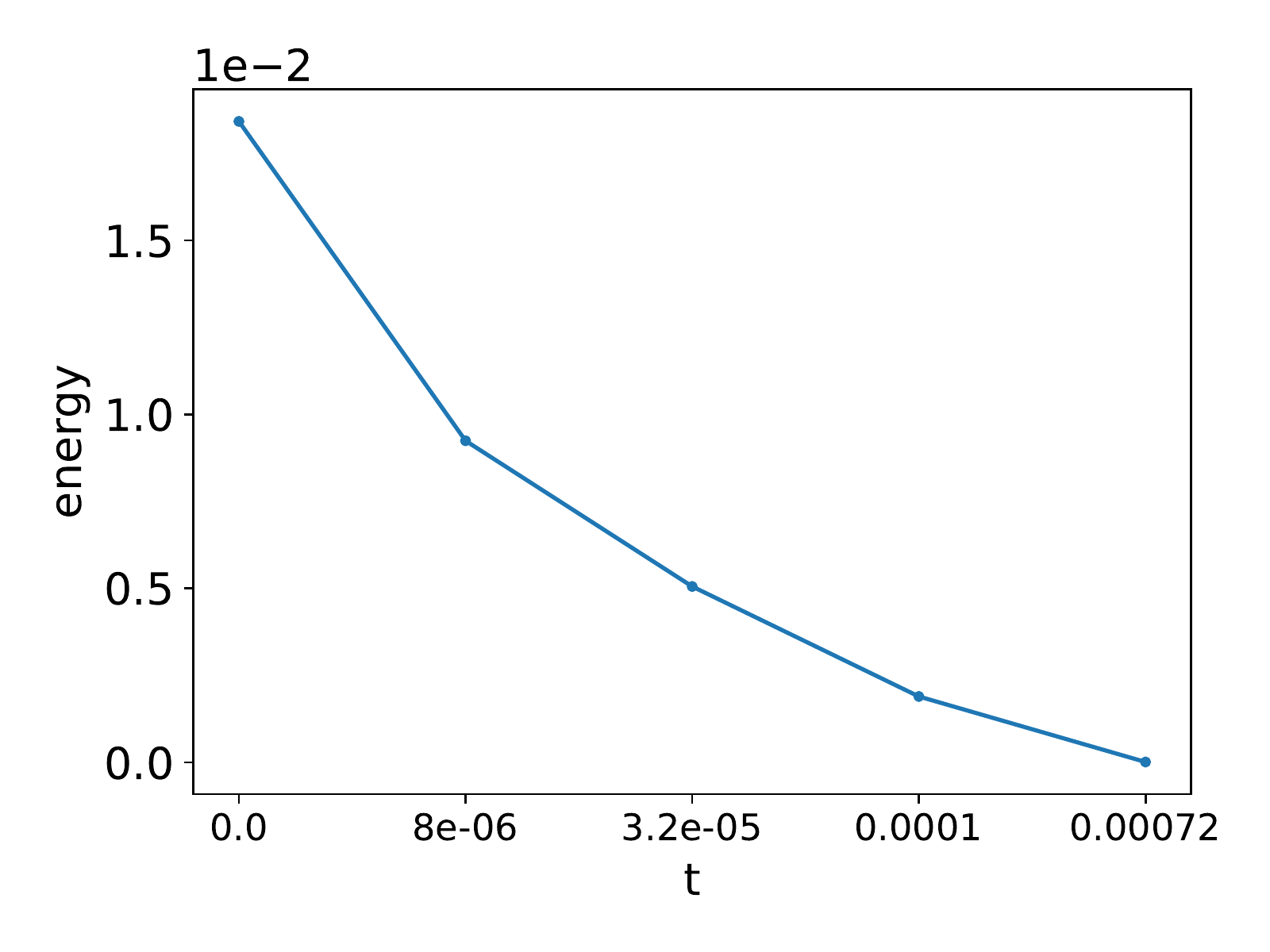} & \includegraphics[width=0.33\textwidth]{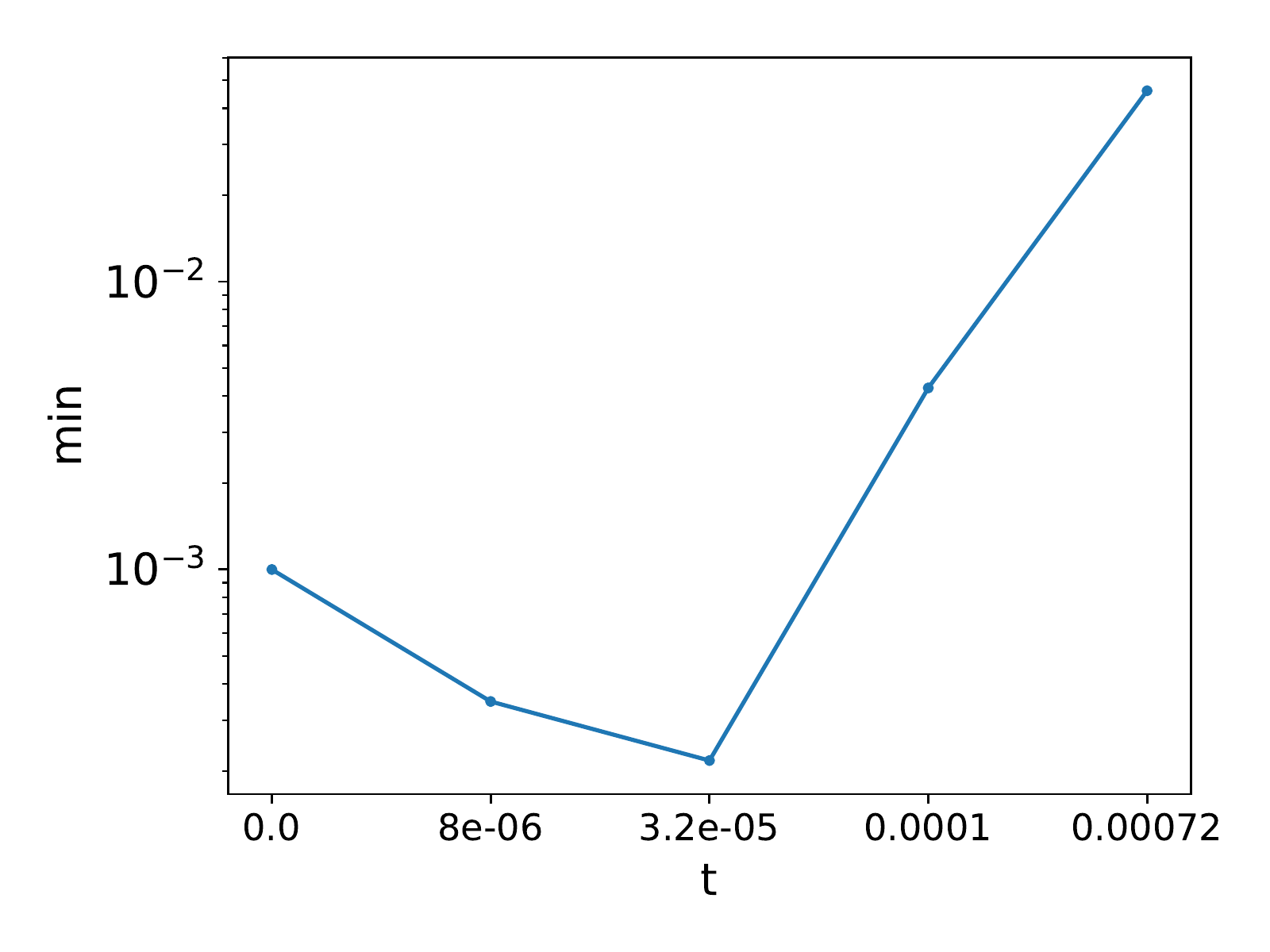}
 \\
 $t = 7.2 \times 10^{-4}$ & Energy & Minimum value
\end{tabular}
\end{centering}
\caption{Numerical results to the equation \eqref{eq:DLSSnd} in $2$D}\label{fig:m8}
\end{figure}

The solutions and their energy functions as well as minimum values are plotted over time in Figure \ref{fig:m8}. The minimum value decreases first and then increases over time but remains positive.
We also perform error computations for different mesh sizes in $2$D. The result is plotted in Figure \ref{fig:err2d}. It is similar with the $1$D example and the numerical convergence order is approximately one in $h$.
\begin{figure}
\begin{center}
\begin{tabular}{c}
    \includegraphics[width=0.5\textwidth]{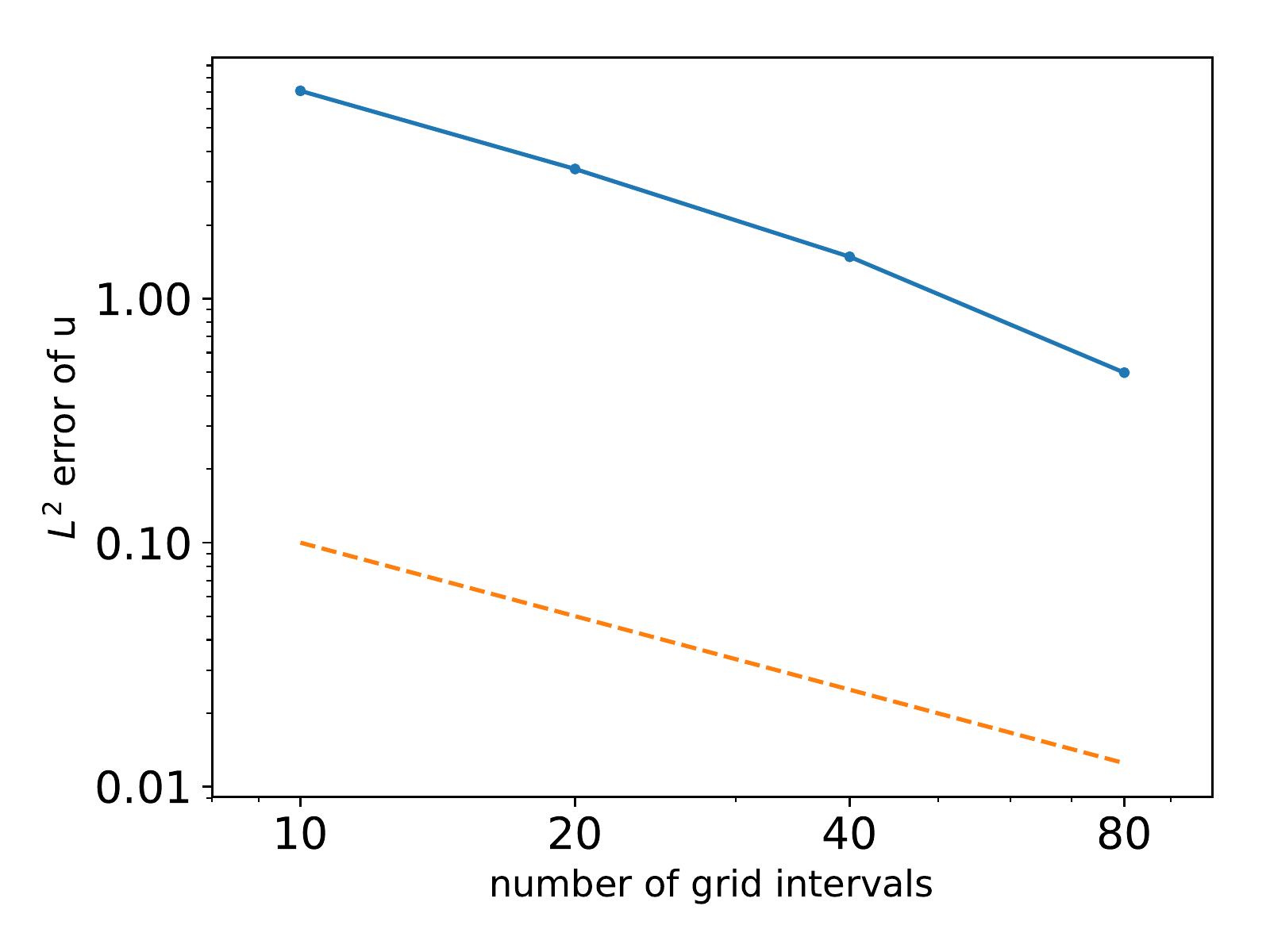}
\end{tabular}
\end{center}
\caption{Error and convergence order in $2$D} \label{fig:err2d}
\end{figure}

 \section*{Acknowledgement} The first author is funded by KAUST. The second author is funded by the National Science Foundation under Grant DMS1812666.
 The authors are grateful to Athanasios E. Tzavaras for valuable suggestions 
 and comments. 


\bibliographystyle{siamplain}

\end{document}